\newcommand{\hexcoord}[3]{
	($ (#1*#2*1cm + #1*#3*0.5cm, #1*#3*0.86603cm) $) 
}
\newcommand{\tikzgraph}{%
	\tikzset{
		hexnode/.style={
			circle, draw, fill=white,
			minimum size=3.5mm*\dx,
			inner sep=0pt,
			line width=0.3pt
		},
		trianglenode/.style={
			regular polygon, 
			regular polygon sides=3, 
			draw=gray, 
			fill=white,
			minimum size=1.5mm*\dx,
			inner sep=0pt,
			line width=0.3pt,
		},
		squarenode/.style={
			regular polygon, 
			regular polygon sides=4, 
			draw=gray, 
			fill=white,
			minimum size=1.5mm*\dx,
			inner sep=0pt,
			line width=0.3pt
		},
		pentanode/.style={
			regular polygon, 
			regular polygon sides=5,
			draw=gray, 
			fill=white,
			minimum size=1.5mm*\dx,
			inner sep=0pt,
			line width=0.3pt
		},
		highlight/.style={
			circle,
			draw=myred, 
			fill=none,
			minimum size=6.5mm*\dx,
			inner sep=0pt,
			line width=1pt
		},
		directed/.style={thick,->, >= {Stealth[length=4pt,width=3pt]}},
		undirected/.style={thin,dashed},
		bidirected/.style={<->,color=myblue,thick}
	}%
}
\def\dxdefault{1.1}
\def\dx{\dxdefault}
\definecolor{myblue}{HTML}{4245A3}
\definecolor{mypastel}{HTML}{1C7AA2}
\definecolor{myorange}{HTML}{FF8726}
\definecolor{myred}{HTML}{D75341}
\definecolor{mygreen}{HTML}{6DA97A}
\theoremstyle{definition}
\newtheorem{defn}{Definition}[section]
\newtheorem{lem}[defn]{Lemma}
\newtheorem{prop}[defn]{Proposition}
\newtheorem*{conject*}{Conjecture}
\theoremstyle{plain}
\newtheorem{theorem}[defn]{Theorem}
\newtheorem*{theorem*}{Theorem}
\theoremstyle{remark}
\newtheorem{ex}[defn]{Example}
\newtheorem{rem}[defn]{Remark}
\newtheorem{openq}[defn]{Question}
\newcommand{\com}[1]{}
\newcommand{\Z}{\mathbb{Z}}
\newcommand{\R}{\mathbb{R}}
\renewcommand{\S}{\mathbb{S}}
\newcommand{\eps}{\varepsilon}
\newcommand{\abs}[1]{\left\lvert#1\right\rvert}
\newcommand{\setsep}{\ \mid \ }
\newcommand{\m}{\text{-}}
\DeclareMathOperator\id{id}
\DeclareMathOperator\Imm{Imm}
\DeclareMathOperator{\im}{im}
\DeclareMathOperator{\RP}{\mathbb{R}\text{P}}
\title{An Invariant for Triple-Point-Free Immersed Spheres}
\author{Jona Seidel\footnote{seidel@mathematik.tu-darmstadt.de,\newline ORCID: 0009-0000-1282-8209,\newline Technical University Darmstadt, Department of Mathematics, Schlossgartenstr.\ 7, 64289 Darmstadt, Germany }}
\date{}
\begin{document}
	
	\maketitle
	
	\begin{abstract}
		We define an invariant of triple-point-free immersions of $ 2 $-spheres into Euclidean $ 3 $-space, taking values in $ l^1(\Z) $. It remains unchanged under regular homotopies through such immersions.
		An explicit description of its image shows that the space of triple-point-free immersed spheres has infinitely many regular homotopy classes. Consequently, many pairs of immersed spheres can only be connected by regular homotopies that pass through triple points. 
		We represent the double points of a triple-point-free immersed sphere using a directed tree, equipped with a pair relation on the edges and an integer-valued function on the vertices. The invariant depends on this function and on the vertex indegrees.\footnote{Mathematics Subject Classification \href{https://mathscinet.ams.org/mathscinet/search/mscdoc.html?code=57R42,57R45,57M15}{57R42; 57R45, 57M15},\newline Keywords: Triple Points, Double Points, Tree, Gauss Code
	}
	\end{abstract}
	
	\newpage
	

	\section{Introduction}
	In geometric topology, a central goal is to classify geometric objects up to continuous deformation. This often involves the study of invariants. \com{Other examples: plane curves, spherical curves}A classical and well-studied setting is knot theory, which analyzes embeddings of the circle $ \S^1 $ into $ \R^3 $. Another rich domain is the study of knotted surfaces, such as embeddings of the two-sphere $ \S^2 $ into $ \R^4 $, both of which yield intricate and diverse families of inequivalent embeddings.
	
	In the space of embeddings of $ \S^2 $ into $ \R^3 $, however, every embedded sphere can be deformed to a standard round sphere of the same orientation \cite{Alexander_EmbSpheres}. Even allowing for self-intersections does not yield more components: the space of immersed spheres in $ \R^3 $ forms a single regular homotopy class \cite{Smale_SE}. This foundational result by Smale sparked interest in \textit{sphere eversions}---regular homotopies between embedded spheres of opposite orientation.
	A result by Max and Banchoff first revealed an interesting structure of self-intersections in the space of immersed spheres.
	\begin{defn}
		An $ n $\emph{-tuple point} of a map $ f:X\to Y $ between sets is a point $ y\in Y $ whose preimage $ f^{-1}(\{y\}) $ contains at least $ n $ points.
	\end{defn}
	\begin{theorem*}{\cite{MB_SE}}\label{thm_SEquad}
		Every sphere eversion has at least one quadruple point.
	\end{theorem*}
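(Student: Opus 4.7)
The strategy is to build a $\Z/2$-valued invariant $Q(f)$ on generic immersions $f\colon \S^2 \to \R^3$ which is preserved by every regular homotopy avoiding quadruple points, and which distinguishes the standard round embedding from its orientation-reversal. Any eversion must then pass through at least one moment at which $Q$ jumps, and by construction such a moment can only be a quadruple-point event.

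The first step is a standard transversality reduction. I would approximate a given eversion by a smooth one-parameter family $\{f_t\}_{t\in[0,1]}$ that is generic for all but finitely many $t$, and at the exceptional times displays exactly one of three codimension-one degenerations: (A) the tangential birth or death of a small isolated double circle; (B) a triple-point birth or death, in which a double-curve arc passes transversely through a third sheet; or (C) the passage through an isolated quadruple point. The theorem then reduces to showing that $Q$ is unchanged across events of types (A) and (B), and changes parity across type (C).

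For the invariant itself I would work on the source. Let $D(f)\subset\R^3$ denote the multiple-point set of $f$; its preimage $\tilde D := f^{-1}(D(f)) \subset \S^2$ is an immersed $1$-manifold whose self-intersections are precisely the three preimages of each triple point of $f$. Since $\S^2$ is simply connected, every component of $\tilde D$ bounds, and the ambient co-orientations of $f$ induce a natural $\Z/2$-quadratic refinement on $\tilde D$; I would define $Q(f)$ as the Arf (Brown) invariant of this refinement. Then a type (A) event inserts or removes an isolated trivial circle and preserves $Q$; a type (B) event is a local Reidemeister-III-type move that is absorbed by the refinement; a type (C) event, by direct local-model computation, flips $Q$. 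Finally, the round sphere has $\tilde D = \emptyset$ and so $Q = 0$, while a generic perturbation of its orientation-reversal computes to $Q = 1$. The principal obstacle lies in the second step: pinning down a definition of $Q$ concrete enough to make the three codimension-one moves tractable, and in particular verifying the sign change across the local model at a quadruple point. Once that local analysis is done, the rest of the argument is formal.
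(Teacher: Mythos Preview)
The paper does not itself prove this theorem; it is quoted from \cite{MB_SE} (with alternative proofs cited from \cite{Hughes_SE,Gor_Local}), and the paper only remarks that the original argument produces a $\Z/2\Z$-valued invariant on quadruple-point-free immersions. Your overall strategy---construct such an invariant and show it separates the two embedded spheres---is therefore in line with the cited approach. However, your concrete proposal has two genuine gaps.

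First, your list of codimension-one strata is incomplete. Besides the elliptic self-tangency (your type (A)), the triple-point stratum (your (B)), and the quadruple-point stratum (your (C)), there is the \emph{hyperbolic} self-tangency $H$, in which two sheets touch along a saddle and the double-point curves reconnect (see Figure~\ref{fig_EHTQ} in the paper). This move is not a birth/death of an isolated circle and it is not a triple-point event; it genuinely changes the topology of $\tilde D$, and you must check separately that your $Q$ is stable under it. Omitting $H$ is a real hole in the argument, not a technicality.

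Second, and more seriously, your proposed invariant cannot distinguish $e$ from $-e$. Both the standard round sphere and its orientation-reversal are \emph{embeddings}: each has empty multiple-point set, hence $\tilde D=\emptyset$ in both cases. Any invariant built from $\tilde D$---in particular any Arf/Brown invariant of a quadratic refinement supported on $\tilde D$---vanishes on the empty set by convention, so $Q(e)=Q(-e)=0$ under your definition. The sentence ``a generic perturbation of its orientation-reversal computes to $Q=1$'' is therefore false: perturbing an embedding does not create double points. The actual invariants used in \cite{MB_SE,Hughes_SE,Gor_Local} bring in additional data (for instance, the local degree of the normal map, or the parity of the number of ``outward-normal local maxima'' for a generic height function) that is sensitive to orientation even in the absence of self-intersections. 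Your $Q$ as stated lacks any such ingredient, so the argument cannot close.
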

	Alternative proofs can be found in \cite{Hughes_SE,Gor_Local}.
	This shows that removing immersions with quadruple points divides the space of immersed spheres into two or more components.
	The proof in \cite{MB_SE} yields a map 
	$$ \{f\in \Imm(\S^2,\R^3) \setsep f \text{ has no quadruple points} \} \to \Z/2\Z, $$ 
	which is invariant under regular homotopies. This is an example of a \textit{local invariant}.
	Such invariants have further been studied for arbitrary oriented surfaces in \cite{Gor_Local,Nowik_Quad, Nowik_AutoQInv,Nowik_qInv}. They are special cases of \textit{finite order} invariants, originally introduced in knot theory by Vassiliev \cite{Vass_Cohom} (cf.\ \cite{CDM_VassilievKnotInv}) and later extended to immersed surfaces in \cite{Nowik_OrderOne,Nowik_HigherOrder}. They define a large class of invariants that provide insight into the structure of the space of immersed surfaces.
	
	However, the classification of all finite order invariants in \cite{Nowik_HigherOrder} shows that such invariants cannot separate any triple-point-free immersion from \emph{both} standard embedded spheres by triple or quadruple points.
	We present an invariant that serves this purpose for triple points; it is therefore necessarily not of finite order.
	
	The paper is organized as follows. The invariant and its properties are stated in Section~\ref{sec_intro_main} whereas in Sections~\ref{sec_intro_Immn} and~\ref{sec_intro_Willmore} some applications are discussed. Section~\ref{sec_prelim} fixes notation and introduces necessary background. In Section~\ref{sec_tree} each immersed sphere without triple points is associated to a directed tree which encodes its double point structure. Section~\ref{sec_treeEH} contains the classification of modifications that a regular homotopy can induce on such trees. Finally, Sections~\ref{sec_Invariant} and~\ref{sec_image} contain the proofs of the main Theorems~\ref{thm_Invariant} and~\ref{thm_imageF}, respectively.
	
	\subsection{Main Results}\label{sec_intro_main}
	We denote by $ \Imm(\S^2,\R^3) $ the space of smooth immersions $\S^2\to\R^3$ and for $ n\geq 2 $
	\begin{equation*}
		\Imm_{<n}(\S^2,\R^3):=\{f\in \Imm(\S^2,\R^3)\ | \ \text{f has no } n \text{-tuple points}\}
	\end{equation*}
	denotes the set of immersed spheres without $ n $-tuple points.
	The invariant is first defined on generic immersions and then extended to all immersions in $ \Imm_{<3}(\S^2,\R^3) $ (see Proposition~\ref{prop_extension}). Given a generic immersion $ f\in \Imm_{<3}(\S^2,\R^3) $, we associate to it a directed tree $ G_f=(V_f,E_f) $ and a topological degree $ \delta_f:V_f\to\Z $ as described in Section~\ref{sec_tree}. For a vertex $ v\in V_f $ we denote by $ \deg^-(v) $ the \textit{indegree} of $ v $, that is, the number of edges that point into $ v $.
	\begin{defn}
		We define
		\begin{equation*}
			F:\Imm_{<3}(\S^2,\R^3)\to \Z^\Z,\qquad f\mapsto \left[\sum_{v\in \delta_f^{-1}(\{k\})} \left( 1-\deg^-(v)\right)\right]_{k\in\Z}.
		\end{equation*}
	\end{defn}
	\begin{theorem}\label{thm_Invariant}
		$ F $ is invariant under regular homotopies in $ \Imm_{<3}(\S^2,\R^3) $.
	\end{theorem}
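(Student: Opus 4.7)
The plan is to reduce the statement to a finite check over a list of local moves. Fix a regular homotopy $(f_t)_{t\in[0,1]}\subset\Imm_{<3}(\S^2,\R^3)$. A standard transversality argument lets us perturb $(f_t)$ to a generic homotopy that meets the wall of non-generic immersions only at finitely many times $0<t_1<\dots<t_N<1$. On each complementary open interval the immersion is generic, so the tree $G_{f_t}$, the degree function $\delta_{f_t}$, and the indegrees $\deg^-(v)$ are all locally constant, and therefore so is $F(f_t)$. Invariance of $F$ thus reduces to showing $F(f_{t_i^-})=F(f_{t_i^+})$ at every critical time.

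Since triple points are excluded, the classification of Section~\ref{sec_treeEH} restricts the possible transitions to the two tangential families: the elliptic ($E$\=/)move, in which a small double circle is born or dies at a Morse-type tangency of two sheets, and the hyperbolic ($H$\=/)move, in which two double circles merge or split across a saddle tangency. For each type Section~\ref{sec_treeEH} specifies the resulting modification of the tree together with the change in the labels $\delta_f$ on the affected vertices. I would treat the $E$- and $H$-moves separately.

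For an $E$-move the modification attaches or removes a single leaf $w$ incident to one existing vertex $v_0$. The leaf has $\deg^-(w)\in\{0,1\}$, and correspondingly $\deg^-(v_0)$ changes by $1$ or $0$. In each subcase one computes $\delta_f(w)$ from the local data of the newly created lens region and verifies that the pair of contributions at the two affected $\delta$-slots of $F$ cancels; the underlying numerical identity is that a new leaf adjoins exactly one new edge, so the refined Euler-characteristic contribution of the added piece vanishes.

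The main obstacle is the $H$-move: here an edge is cut and two subtrees are reattached, possibly with reorientations, so that several indegrees and several $\delta_f$-values change simultaneously. The global sanity check is the tree identity $\sum_v(1-\deg^-(v))=|V|-|E|=1$, which shows that the \emph{total} of the entries of $F$ is preserved, but one must strengthen this to the slot-by-slot preservation required by the definition of $F$. The key geometric input, to be read off from the local model in Section~\ref{sec_treeEH}, is that an $H$-move is supported in a small ball in $\R^3$ and involves only two sheets, so that the $\delta$-values outside the modified subtree are unaffected and the changes inside are determined by a finite list of local patterns. Checking each of these patterns—that the altered $\delta_f$ labels and indegrees transform so as to redistribute the contributions within a single $\delta$-slot—is the remaining bookkeeping that completes the proof.
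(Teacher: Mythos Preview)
Your overall architecture---perturb to a generic homotopy and check invariance across the finitely many $E$- and $H$-walls---matches the paper exactly. But both of your local analyses have real gaps.

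\textbf{The $E$-move.} An elliptic tangency creates a \emph{pair} of double point curves $\alpha,\beta=P_f(\alpha)$ on $\S^2$, not one. Consequently the tree gains \emph{two} leaves $w_1,w_2$, attached at (possibly different) vertices $v_1,v_2$; see Proposition~\ref{prop_classTreeMod}\,\ref{enum_treeModE}. More importantly, the orientation convention of Section~\ref{sec_tree} directs every edge toward the disk side, so each new edge points \emph{into} its leaf and $\deg^-(w_i)=1$ is forced; there is no subcase $\deg^-(w)=0$. This matters: in your hypothetical $\deg^-(w)=0$ case the leaf would contribute $+1$ in slot $\delta(w)$ while $v_0$ would lose $1$ in slot $\delta(v_0)$, and since $\delta(w)=\delta(v_0)\pm2$ these do \emph{not} cancel. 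The correct argument is simply that both new leaves have indegree $1$, hence contribute $1-1=0$, while no existing indegree changes.

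\textbf{The $H$-move.} Saying ``check the finite list of patterns'' is not yet a proof; the content is identifying the single identity that makes every pattern work. The paper's point is that each half of an $H$-crossing is an $H_p$- or $H_s$-split in which one vertex $v_f$ is replaced by two vertices $v_g,w_g$ with the \emph{same} value of $\delta$, and
\[
\deg^-(v_f)=\deg^-(v_g)+\deg^-(w_g)-1,
\]
equivalently $(1-\deg^-(v_f))=(1-\deg^-(v_g))+(1-\deg^-(w_g))$. Because the three vertices share a $\delta$-value, this equality is exactly the slot-by-slot invariance of $F$ at that slot; all other vertices and indegrees are untouched. You should state this identity and the equal-$\delta$ fact explicitly rather than defer them to ``bookkeeping''.
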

	\begin{theorem}\label{thm_imageF}
	The image of $ F $ is given by
		\begin{align*}
			\left\{ (h_k)_{k\in\Z} \in \Z^\Z \ \middle| \ h_{2k} = 0 \text{ for all } k\in\Z \text{ and }\sum_{k\in\Z} |h_k|<\infty \text{ and } \sum_{k\in\Z} h_k = 1 \right\}.
		\end{align*}
	\end{theorem}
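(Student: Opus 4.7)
The plan has two parts: first, establish that every value of $F$ satisfies the three stated conditions; second, construct a triple-point-free immersion realizing each admissible sequence.

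For the containment direction, finiteness of the tree $G_f$ (inherent to the construction in Section~\ref{sec_tree}) immediately yields finite support, and hence $\ell^1$-summability, of $F(f)$. The normalization
\[
	\sum_{k\in\Z} h_k \;=\; \sum_{v\in V_f}\bigl(1-\deg^{-}(v)\bigr) \;=\; |V_f| - |E_f| \;=\; 1
\]
follows from the identity $\sum_{v}\deg^{-}(v) = |E_f|$, valid in any directed graph, together with $|E_f| = |V_f|-1$ for a tree. The vanishing $h_{2k} = 0$ must come from a parity property of $\delta_f$, namely that $\delta_f(v)$ is odd for every vertex $v\in V_f$; I would extract this directly from the construction in Section~\ref{sec_tree}, where the topological degree attached to a vertex is pinned down modulo $2$ by the orientation of the sphere (degrees of adjacent regions in $\R^3\setminus f(\S^2)$ differ by $\pm 1$, and a suitable basepoint convention forces odd parity at every vertex of $G_f$).

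For surjectivity, the round embedded sphere realizes the sequence supported only at $k=\pm 1$ (its tree is a single vertex with $\delta\in\{\pm 1\}$, and indegree $0$). The strategy is then to exhibit a small family of explicit local modifications of triple-point-free immersions---necessarily crossing the codimension-one stratum of triple points---each of which alters $F(f)$ by a controlled elementary vector. Natural candidates are differences of standard basis vectors at odd indices summing to zero, such as $e_{k+2}-e_k$ for each odd $k\in\Z$. Each such geometric move corresponds to a predictable modification of the tree (for instance splitting a vertex, attaching a leaf, or reorienting an edge), together with a computable effect on $\delta_f$ and on the indegrees. An induction on $\sum_k |h_k|$ then realizes any admissible sequence as a finite composition of such moves applied to the round sphere.

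The main obstacle lies in the surjectivity step: identifying a convenient generating family of geometric moves, verifying that each can be realized by an explicit isotopy-plus-triple-point-crossing of spheres, and computing the exact effect on the tree, the degree function, and the indegrees. A useful sanity check is that every candidate move vector must lie in the kernel of the linear functional $(h_k)\mapsto \sum h_k$ and be supported at odd indices, so the admissible lattice is spanned by vectors of the form $e_j - e_k$ with $j,k$ odd---a countable but simple family. Once this catalogue of moves is in place, the induction is routine. The necessity direction, by contrast, is a short combinatorial calculation relying only on the parity of $\delta_f$ and on elementary tree identities.
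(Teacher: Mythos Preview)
Your necessity argument matches the paper's exactly: finiteness of the tree gives finite support, the identity $\sum_{v}(1-\deg^-(v))=|V_f|-|E_f|=1$ gives the normalization, and oddness of $\delta_f$ (which is Lemma~\ref{lem_delta-f}\ref{lem_delta-f_odd}) gives $h_{2k}=0$.

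For surjectivity the paper takes a more direct route than you propose. Rather than deforming the round sphere through triple-point crossings, it constructs each target immersion from scratch as a surface of revolution. Explicit rotationally symmetric building blocks $f_k$ with $F(f_k)=e_k$ and $g_k$ with $F(g_k)=2e_1-e_k$ are exhibited for every odd $k$ (Examples~\ref{ex_Fek} and~\ref{ex_F2e1-ek}), together with a connected-sum-at-the-poles operation on surfaces of revolution satisfying $F(f\# g)=F(f)+F(g)-e_1$. Writing any $h\in U$ as $\sum_{l=1}^n e_{a(l)}-\sum_{l=1}^{n-1} e_{b(l)}$ and forming $i_h=f_{a(1)}\# g_{b(1)}\#\cdots\# g_{b(n-1)}\# f_{a(n)}$ then gives $F(i_h)=h$ by a one-line computation (Lemma~\ref{lem_UinImF}). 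No homotopies, no crossings of strata, no tracking of tree modifications are needed.

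Your approach has two concrete gaps beyond the missing catalogue of moves you already flag. First, induction on $\sum_k|h_k|$ is ill-suited to moves of the form $e_{k+2}-e_k$: the step $e_1\mapsto e_3$ preserves the $l^1$-norm, so the base case $\sum_k|h_k|=1$ already contains the infinite family $\{e_k:k\text{ odd}\}$, none of which the induction reduces. Second, a local move realizing $e_{k+2}-e_k$ presumably requires the current immersion to carry a vertex with $\delta=k$, which the round sphere lacks for $k\neq\pm 1$; you would first have to manufacture such a vertex, and the bookkeeping then drifts toward a connected-sum formalism anyway. The paper's direct construction sidesteps both issues entirely.
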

	Interestingly, the image is entirely realized by surfaces of revolution (see Section~\ref{sec_image}).
	
	\begin{rem}
		Recall that the space of diffeomorphisms $ \text{Diff}(\S^2) $ has two homotopy classes which contain the orientation-preserving and -reversing diffeomorphisms \cite{Smale_DiffS2}. With Lemma~\ref{lem_delta-f}~\ref{lem_delta-f_-f}, the invariant $ F $ restricts to $ \Imm_{<3}(\S^2,\R^3) / \text{Diff}(\S^2) \to \Z^{\Z}/\sim $ by identifying $ (h_k)_{k\in\Z} \sim (l_k)_{k\in\Z} $ if $ h_k = l_{\m k} $.
	\end{rem}

	\subsection{The Space of Immersed Spheres without $ n $-tuple Points}\label{sec_intro_Immn}
	
	It is unknown whether the invariant $ F $ is complete. However, Theorem~\ref{thm_imageF} shows that it already divides the space $ \Imm_{<3}(\S^2,\R^3) $ into infinitely many components. Previously, it was known---via the invariant of \cite{MB_SE}---that this space has at least two components. Table~\ref{tab_RHCNtuple} provides an overview of the number of components of the space of immersed spheres without $ n $-tuple points.
	
	\begin{table}[H]
		\centering
		\begin{tabular}{@{} lcl @{}}  
			\toprule
			Space & Number of regular homotopy classes &  \\
			\midrule
			$ \Imm(\S^2,\R^3) $   & $ 1 $         & \cite{Smale_SE} \\
			$ \Imm_{<2}(\S^2,\R^3) $ & $ 2 $   & \cite{Alexander_EmbSpheres} \\
			$ \Imm_{<3}(\S^2,\R^3) $ & $ \infty $  & Theorem~\ref{thm_imageF} \\
			$ \Imm_{<4}(\S^2,\R^3) $ & $ \geq 2 $   & \cite{MB_SE} \\
			$ \Imm_{<n}(\S^2,\R^3) $, $ n\geq 5 $ &  1   & \\
			\bottomrule
		\end{tabular}
	\caption{The number of regular homotopy classes of the space $ \Imm_{<n}(\S^2,\R^3) $ of immersed $ 2 $-spheres without $ n $-tuple points.}
	\label{tab_RHCNtuple}
	\end{table}
	
	For $ n\geq 5 $, the space $ \Imm_{<n}(\S^2,\R^3) $ consists of one regular homotopy class due to the fact that $ n $-tuple points for $ n\geq 5 $ have codimension greater than $ 1 $: every regular homotopy can be perturbed to not have $ n $-tuple points for $ n\geq 5 $ (cf.\ Section~\ref{sec_prelim}).
	
	Table~\ref{tab_RHCNtuple} raises a natural question.
	\begin{openq}\label{q_RHCwithoutQ}
		Does the space $ \Imm_{<4}(\S^2,\R^3) $ of immersed spheres without quadruple points have more than two regular homotopy classes?
	\end{openq}
	\com{One can check that Nowik's universal first order invariant $ f $ from \cite{Nowik_OrderOne} normalized at $ j $ evaluates to $ f(j)=0 $, $ f(e)=Q^2_0 $ and $ f(-e)= -H^2_1 $, where both $ e $ and $ j $ have outer normal.}
	
	\subsection{Applications to Willmore energy}\label{sec_intro_Willmore}
	One application of this invariant is in the study of the Willmore energy of immersed spheres \cite{MBS_Landscape}. 
	The Willmore energy is the integral of the squared mean curvature and linked to self-intersections by the Li-Yau inequality \cite{LY}: An immersion with an $ n $-tuple point has Willmore energy of at least $ 4\pi n $. The invariant helps to describe the energy landscape and understand the singular behavior of the associated gradient flow. 
	Consider a smooth immersion $ j:\S^2\to\R^3 $ obtained by rotating the curve displayed in Figure~\ref{fig_sketchJ}. Surfaces of this kind were first treated in \cite{MS_Num,Blatt} as initial surfaces for singular Willmore flows.
	\begin{figure}[H]
		\centering
		\vspace*{-0.05\textwidth}
		\includegraphics[align=c,width=0.49\textwidth]{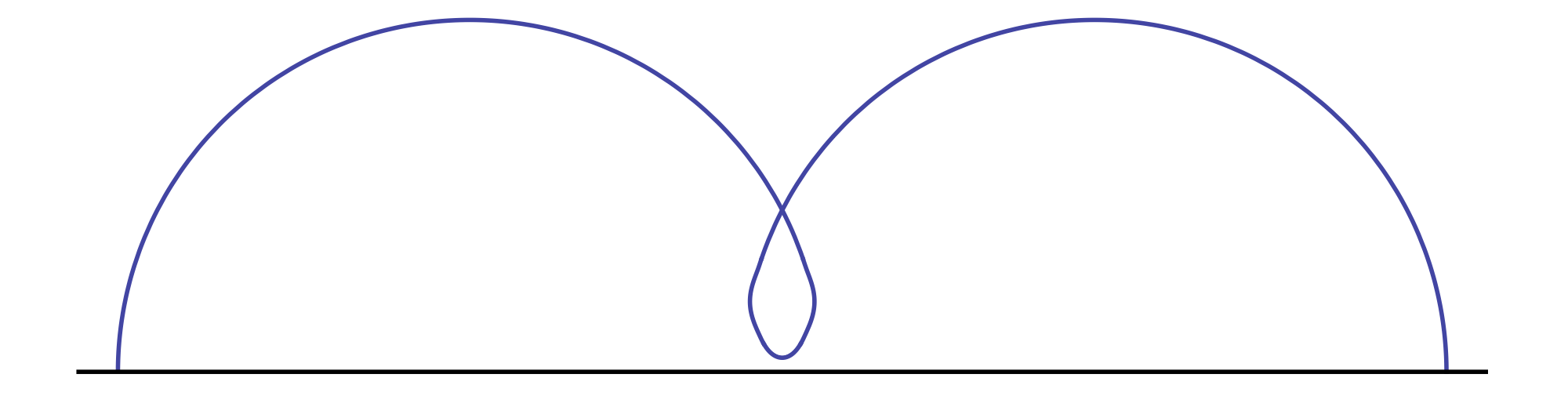}
		\includegraphics[align=c,width=0.49\textwidth]{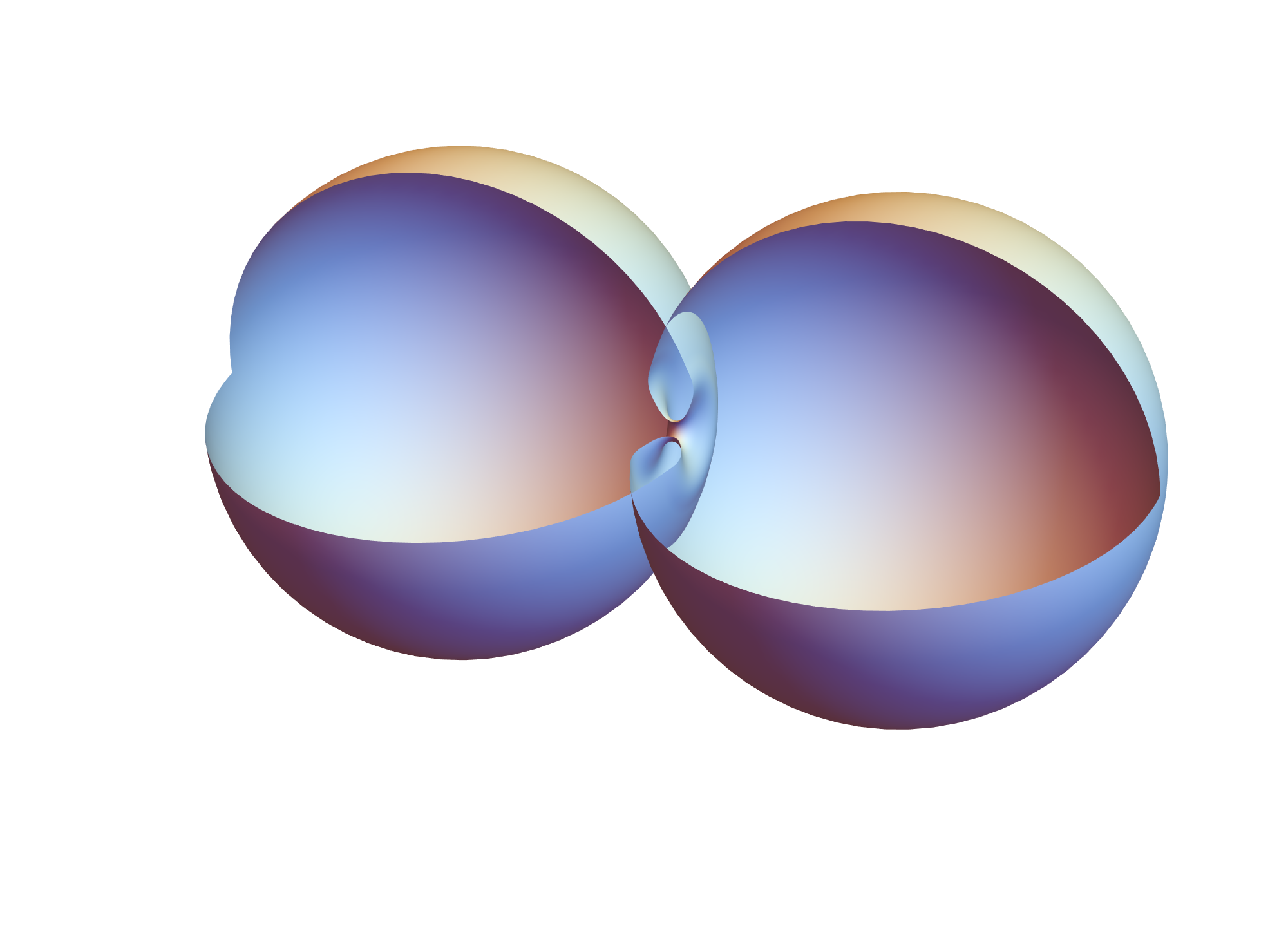}
		\vspace*{-0.05\textwidth}
		\caption{The immersion $ j $ and its generating curve.}\label{fig_sketchJ}
	\end{figure}
	\begin{theorem}\label{thm_triplePointsE3}
		Every regular homotopy joining $ j $ to an embedded sphere has triple points.
	\end{theorem}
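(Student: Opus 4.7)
The plan is to use the invariant $F$ as a regular homotopy obstruction. By Theorem~\ref{thm_Invariant}, any regular homotopy that stays in $\Imm_{<3}(\S^2,\R^3)$ preserves the value of $F$; equivalently, if a regular homotopy from $j$ to an embedded sphere $e$ has no triple points, then $F(j)=F(e)$. It therefore suffices to show that $F(j)\neq F(e)$ for every embedded sphere $e\in\Imm_{<3}(\S^2,\R^3)$, which reduces the theorem to an explicit computation.

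First I would evaluate $F$ on an embedded sphere $e$. Since $e$ has no double points, the associated tree $G_e$ consists of a single vertex $v$ with $\deg^-(v)=0$, so $F(e)$ is the sequence whose only nonzero entry is a $1$ at the index $\delta_e(v)$. From the construction in Section~\ref{sec_tree} this index equals $+1$ or $-1$ depending on the orientation of $e$. Invoking the symmetry noted in the Remark following Theorem~\ref{thm_Invariant}, it thus suffices to exhibit one index $k\notin\{-1,+1\}$ at which $F(j)_k\neq 0$.

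The heart of the argument is therefore the explicit computation of $F(j)$. Since $j$ is a surface of revolution, its double circles are in bijection with the self-intersections of the generating curve depicted in Figure~\ref{fig_sketchJ}, and each connected region of $\R^3\setminus j(\S^2)$ corresponds to a component obtained by rotating a connected region of the plane minus the curve. From this picture I would read off the directed tree $G_j$, the pair relation on its edges, and the topological degree $\delta_j:V_j\to\Z$ by the procedure of Section~\ref{sec_tree}, and then evaluate $F(j)$ by grouping vertices according to their $\delta_j$-value and summing $1-\deg^-(v)$.

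The expected outcome is that $F(j)$ has support on at least one odd integer outside $\{-1,+1\}$, produced by the deeply nested pocket visible in the inner self-intersections of the generating curve; this immediately contradicts $F(j)=F(e_\pm)$ and proves the theorem. The main obstacle is orientation bookkeeping: correctly identifying the direction in which each double circle is traversed on each sheet, and hence the topological degree of each region cut out by $j$, so that the computed indices are consistent with the combinatorial conventions of Section~\ref{sec_tree}. Once this calculation is carried out, the conclusion follows at once from Theorem~\ref{thm_Invariant}.
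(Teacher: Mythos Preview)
Your proposal is correct and follows essentially the same approach as the paper. The paper's one-line proof cites Theorem~\ref{thm_Invariant} together with Example~\ref{ex_Fej}, which records precisely the computations you outline: $F(j)=e_3$ while $F(\pm e)=e_{\pm 1}$ (the tree computation for $j$ is carried out earlier in Example~\ref{ex_jTree}). Your invocation of the Remark after Theorem~\ref{thm_Invariant} is unnecessary---it suffices to observe directly that any embedded sphere has $F$ equal to $e_{1}$ or $e_{-1}$, as you already argued.
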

	\begin{proof}
		This follows from Theorem~\ref{thm_Invariant} and Example~\ref{ex_Fej}.
	\end{proof}
	Theorem~\ref{thm_triplePointsE3} provides a topological explanation for the appearance of singularities in the Willmore flow. It also yields a lower bound on connected components of sublevel sets of the Willmore energy. See \cite{MBS_Landscape} for further details. 
	
	The surface $ j $ may serve as a candidate for approaching Question~\ref{q_RHCwithoutQ}.
	\begin{openq}\label{q_quadruplePoints}
		Does every regular homotopy joining $ j $ to an embedded sphere have at least one quadruple point?
	\end{openq}
	\section{Preliminaries}\label{sec_prelim}
	Consider the map $ \chi:\R^3\to\R^3 $ with $ \chi(x)=-x $. For maps $ f:\Sigma\to\R^3 $ and subsets $ A\subseteq \R^3 $ we write $ -f:=\chi\circ f $ and $ -A:=\chi(A) $.
	We consider the standard embedding $ e:\S^2\to\R^3 $ and fix by convention the oriented normal of $ e $ to be outward facing.
	For an interval $ I\subseteq \R $ and a homotopy $ H: X\times I\to Y$ we write $ H_t:=H(\cdot,t) $.
	
	We endow $ \Imm(\S^2,\R^3) $ with the $C^\infty$-topology. In view of the Whitney Approximation Theorem (cf.\ \cite{Hirsch_DT}), the following also applies to the space of $ C^k $-immersions, $ k\geq 1 $, such as $ C^2 $-immersions (as in \cite{MB_SE}) or $ C^1 $-immersions (as in \cite{Nowik_OrderOne}).
	
	The space of immersions $ \Imm(\Sigma,\R^3) $ of a closed surface $ \Sigma $ admits a stratification as described in \cite[Sect.\ 1.1]{Gor_Local} and \cite[Sect.\ 2]{Nowik_OrderOne}.
	We denote the union of the strata of codimension $ 0 $ by $ I_0\subseteq \Imm(\S^2,\R^3) $ and the union of the strata of codimension $ 1 $ by $ I_1\subseteq\Imm(\S^2,\R^3) $\com{Codimension refers to the multigerm codimension, w.r.t.\ the dimension of the space of variations of the germ $ f:\S^2,\{p_1,\dots,p_r\}\to \R^3,\{0\} $ with $ r\in\{2,3,4\} $ see \cite{HobbsKirk}}. 
	The subset $ I_0\subseteq \Imm(\S^2,\R^3) $ is open and dense and the image of an immersion $ i\in I_0 $ is locally equal to the union of one, two or three coordinate planes, up to ambient isotopy. We call any immersion $ i\in I_0 $ \textit{generic}.
	
	The types of strata in $ I_1 $ are classified up to ambient isotopy \cite{HobbsKirk} (see also \cite{Mond_Germs,Gor_Local,WAO_Multigerms}). They are denoted by $ E, H, T, Q\subseteq I_1 $, corresponding to elliptic and hyperbolic self-tangencies, and to triple and quadruple points. Any immersion in $ I_1 $ is generic outside a ball in $ \R^3 $. Inside this ball it is, up to ambient isotopy, of the following form (with $ \lambda=0 $).
	
	\begin{tabular}{rllll}
	$ E: $ & $ z=x^2+y^2+\lambda $,\quad& $ z=0 $,\quad & &\\
	$ H: $ & $ z=x^2-y^2+\lambda $,\quad & $ z=0 $,\quad & &\\
	$ T: $ & $ z=x^2+y\phantom{^2}+\lambda $,\quad & $ z=0 $,\quad & $ y=0 $,\quad &\\
	$ Q: $ & $ z=x\phantom{^2}+y\phantom{^2}+\lambda $,\quad & $ z=0 $,\quad & $ y=0 $,\quad & $ x=0 $. \\
	\end{tabular}\linebreak
	A transverse crossing of a codimension one stratum is represented by varying $ \lambda\in\R $ in the above normal forms (see Figure~\ref{fig_EHTQ}). 
	Note that immersions in $ Q $ and $ T $ have only transverse self-intersections but are not considered generic.
		
	\begin{figure}[H]
		\centering
		\begin{tikzpicture}
			\def\dd{0.03*\textwidth}
			\def\dy{0.04*\textwidth}
			
			\node[anchor=south] at (0,0) {
				\includegraphics[width=0.6\textwidth]{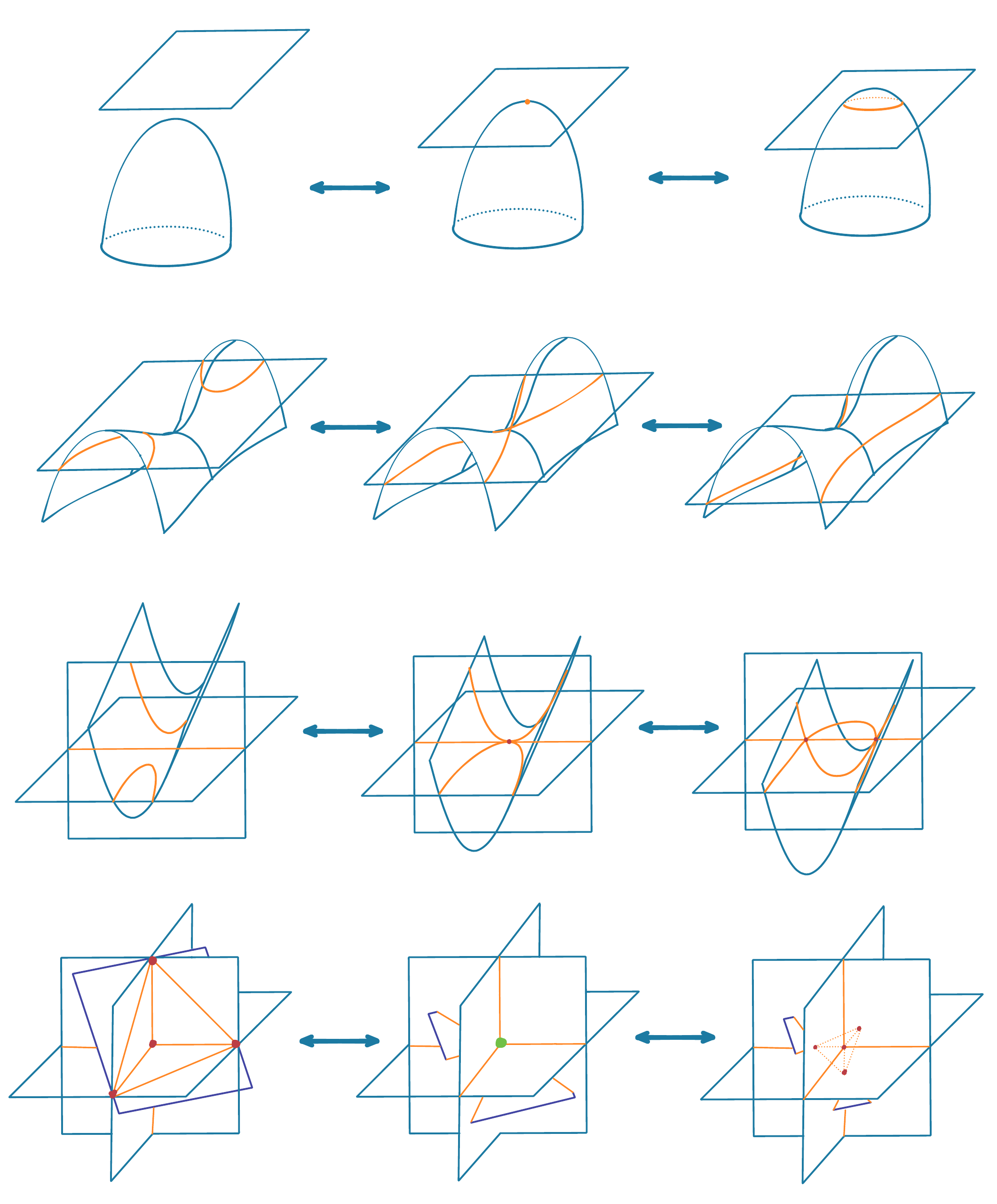}};
			\node at (-\dd,18*\dy) {\huge $ E $};
			\node at (-\dd,13.5*\dy) {\huge $ H $};
			\node at (-\dd,9.2*\dy) {\huge $ T $};
			\node at (-\dd,4.7*\dy) {\huge $ Q $};
		\end{tikzpicture}
		\caption{Transverse crossings of the codimension $ 1 $ strata of $ \Imm(\Sigma,\R^3) $ with indication of double point curves, triple points and the quadruple point.}\label{fig_EHTQ}
	\end{figure}
	
	Any regular homotopy $ H $ can be slightly perturbed such that $ H_t $ is generic for all but finitely many times $ t_1,\dots,t_k $ at which it is contained in one of the codimension one strata $ E $, $ H $, $ T $ or $ Q $. We call such a regular homotopy \textit{generic}.
	
	\section{The Double Point Tree of a Generic Immersed Sphere without Triple Points}\label{sec_tree}
	Throughout this section let $ f\in\Imm_{<3}(\S^2,\R^3) $ be generic.
	Consider the double point set
	\begin{equation*}
		X_2(f):=\left\{ x\in\R^3 \ \mid\ f^{-1}(\{x\}) \textit{ has exactly two elements} \right\}.
	\end{equation*}
	Locally around a point $ x\in X_2(f) $, the image of $ f $ is, up to ambient isotopy, the intersection of two coordinate planes. Together with the assumption that $ f $ has no triple points, this implies the following.
	The preimage of each connected component of $ X_2(f) $ consists of two circles.
	Let $ \alpha_i,\beta_i $ with $ i=1,\dots,m $ be pairs of parametrizations $ \alpha_i,\beta_i:\S^1\to\S^2 $ of these circles such that $ f\circ \alpha_i = f\circ \beta_i$. We call $ \alpha_i,\beta_i $ and also their images the \textit{double point curves} of $ f $, and we refer to $ \beta_i $ as the \textit{conjugate} of $ \alpha_i $.
	Self-conjugate curves do not exist in the absence of triple points (see the lemma in \cite[p.\ 411]{Banch_TPSurgery}).
	This yields a finite collection of pairwise disjoint double point curves on $ \S^2 $.
	
	We define a graph $ G_f=(V_f,E_f) $ where the vertices $ V_f $ are the connected components of $ \S^2 $ with the double point curves removed. The set of edges $ E_f $ contains all pairs of components whose boundaries have nonempty intersection. Since the double point curves are disjoint, a common boundary between two components always consists of a single double point curve. Therefore, we can identify $ E_f $ with the set of double point curves. 
	\begin{defn}
		A map $ P:X\to X $ on a set $ X $ is called a \textit{pairing} if $ P \circ P=\id $ and $ P(x)\neq x $ for all $ x\in X $.\com{short: fixed-point-free involution}
	\end{defn}
	We observe that $ G_f $ is a tree with a pairing $ P_f:E_f\to E_f $ defined by $ P_f(\alpha_i):=\beta_i $. 
	
	\begin{rem}
	Such tree representations are the higher dimensional analogue for Gauss codes of self-intersecting curves \cite{Rosenstiehl_GaussCodes}. 
	A natural question is the classification of \textit{realizable} trees---those that represent the self-intersections of an actual immersion. There is a classification of realizable trees with a pairing \cite{Lippner,Kalmar_DoublePointsKnottedSpheres}. For the general case of generic immersed spheres (with triple points) the classification of realizable graphs remains open (see \cite{Nowik_DissectingS2,Borbely}).
	\end{rem}
	
	We want to endow the tree $ G_f $ with additional information in two steps. First, we turn it into a \textit{directed} tree, using the pairing:
	A pair of double point curves $ (\alpha_i,\beta_i) $ divides $ \S^2 $ into two disk components and one annulus component. Hence, each edge connects a component contained in one of the disks to a component contained in the annulus. We orient the edge toward the disk component. In the tree, hence, each edge is directed away from its conjugate edge along the unique path connecting the pair.
	
	Second, we introduce a function $ \delta_f:V_f\to\Z $ which we call the \textit{(local) topological degree} of $ f $. Let us first associate to each connected component $ C $ of $ \R^3\setminus f(\S^2) $ its topological degree. Pick any point $ p\in C $ and consider the radial projection of $ f $ onto $ \partial_1 B(p)\simeq \S^2 $, i.e. $ f_p(x)  = \frac{f(x)-p}{|f(x)-p|} $. This is a differentiable map $ f_p:\S^2\to\S^2 $ and we set
	$$ \deg_f C:= \deg f_p, $$
	where $ \deg $ denotes the topological degree (cf.\ \cite{Bred}). This is well-defined since $ f_p $ and $ f_q $ are homotopic for all $ p,q\in C $, i.e.\ take a path $ c:[0,1]\to C $ from $ p $ to $ q $ and the homotopy $ t\mapsto f_{c(t)} $.
	For $ v\in V_f $ the image set $ f(v) $ has exactly two adjacent connected components $ C_1,C_2 $ of $ \R^3\setminus f(\S^2) $ and we set 
	$$ \delta_f(v):= \deg_f C_1 + \deg_f C_2. $$
	\com{The degree $ \delta_f $ is not to be confused with the topological degree of the normal of $ f $ which is always $ 1 $ since all immersed spheres in $ \R^3 $ are regularly homotopic, i.e.\ all normal maps are homotopic.}
	
	\begin{defn}\label{def_tree}
		A \textit{double point tree} is a triple $ (G,P,\delta) $ of a directed tree $ G=(V,E) $, a pairing $ P:E\to E $, and a map $ \delta:V\to\Z $.
		The \textit{double point tree} of a generic immersion $ f\in \Imm_{<3}(\S^2,\R^3) $ is the double point tree $ (G_f,P_f,\delta_f) $.
	\end{defn}
	
	\begin{lem}\label{lem_delta-f}
		For generic $ f\in \Imm_{<3}(\S^2,\R^3) $ we have for all $ v\in V_f $ and $ (v,w)\in E_f $
		\begin{enumerate}[label=\roman*)]
			\item $ \delta_f(v) $ is odd,\label{lem_delta-f_odd}
			\item $ \delta_f(w) = \delta_f(v)\pm 2 $,\label{lem_delta-f_pm2}
			\item $ \delta_{-f} = -\delta_f $.\label{lem_delta-f_-f}
		\end{enumerate}
	\end{lem}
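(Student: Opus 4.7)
The plan is to reduce everything to the basic fact that the local degree $\deg f_p$ changes by $\pm 1$ when the basepoint $p$ crosses a single smooth sheet of $f(\S^2)$ transversely, where the sign is determined by the orientation of the sheet at the crossing point (and is locally constant along that sheet). This follows from the standard interpretation of $\deg f_p$ as the signed count of intersections of a generic ray emanating from $p$ with $f(\S^2)$: pushing $p$ across a sheet removes (or adds) one intersection with sign equal to the local orientation sign of the sheet.

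For \ref{lem_delta-f_odd}, I would pick an ordinary (non-double) point $q\in f(v)$. Locally near $q$, the surface is a single embedded sheet, and the two sides belong to the global components $C_1,C_2$ adjacent to $f(v)$. Crossing at $q$ gives $\deg_f C_2 = \deg_f C_1 \pm 1$, so $\delta_f(v)=\deg_f C_1+\deg_f C_2 = 2\deg_f C_1 \pm 1$ is odd. Part \ref{lem_delta-f_-f} is essentially a formal manipulation: from the definition of $f_p$ one computes $(-f)_p = \chi\circ f_{-p}$, and since $\chi\colon\S^2\to\S^2$ has degree $-1$, we obtain $\deg(-f)_p=-\deg f_{-p}$. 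The components of $\R^3\setminus(-f)(\S^2)$ are exactly $-C$ for $C$ a component of $\R^3\setminus f(\S^2)$, and the components adjacent to $(-f)(v)$ are $-C_1(v),-C_2(v)$. Picking $p\in -C_i(v)$ gives $-p\in C_i(v)$ and hence $\deg_{-f}(-C_i(v))=-\deg_f C_i(v)$; summing proves the claim.

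The main content is part \ref{lem_delta-f_pm2}. Let $\alpha$ be the double point curve corresponding to the edge $(v,w)$ and $\beta=P_f(\alpha)$ its conjugate, with $u,u'$ the two components of $\S^2$ minus the double point curves adjacent to $\beta$. At a point $p\in f(\alpha)=f(\beta)$, in local coordinates where sheet $A$ (from $\alpha$) is $\{z=0\}$ and sheet $B$ (from $\beta$) is $\{y=0\}$, I would take the four local quadrants $Q_1,\dots,Q_4$ and observe that $Q_1,Q_2$ are the two local sides of $f(v)$ and $Q_3,Q_4$ are the two local sides of $f(w)$, so $\delta_f(v)$ is the sum of the degrees of the global components containing $Q_1,Q_2$, and similarly for $\delta_f(w)$. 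Moving a basepoint from $Q_1$ to $Q_3$ and from $Q_2$ to $Q_4$ both amount to crossing sheet $B$ transversely.

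The crux of the argument — and where I expect the main (though small) obstacle — is showing that these two crossings contribute the \emph{same} sign $\varepsilon\in\{\pm 1\}$ to the degree change: concretely, that $\deg Q_3-\deg Q_1 = \deg Q_4-\deg Q_2$. This is precisely the statement that the local orientation sign of sheet $B$ is constant along a small neighborhood of $p$ on $B$, which requires verifying that the two crossing points (one at $z>0$, one at $z<0$, both on $\{y=0\}$) lie on the same oriented sheet of $f$ (namely, the image of a tubular neighborhood of $\beta$, which is connected and oriented). Once this is settled, summing yields $\delta_f(w)-\delta_f(v)=2\varepsilon=\pm 2$.
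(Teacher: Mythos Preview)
Your proposal is correct and follows essentially the same approach as the paper. For part~\ref{lem_delta-f_pm2} the paper phrases the argument as sliding a point $q$ along the image from $f(v)$ into $f(w)$ while tracking the two normal offset points $q\pm\eps\nu(q)$, whereas you sit directly at the double point and compare the four local quadrants; these are the same geometric picture, and your explicit justification that the two crossings of sheet $B$ carry the same sign (via connectedness and orientability of the $\beta$-sheet) is in fact more detailed than what the paper writes.
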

	\begin{proof}
		\ref{lem_delta-f_odd} The functions $ f_p:\S^2\to\S^2 $ are differentiable, so the degree can be computed by counting the preimages of a regular value with orientation (cf.\ \cite{Bred}). If $ p $ passes through one sheet of the image of $ f $, then by this definition the degree of $ f_p $ changes by $ \pm 1 $. It follows that $ \delta_f $ has odd values. \\
		\ref{lem_delta-f_pm2} Now, take some point $ q $ on $ f(v) $ and let $ \nu(q) $ denote the normal of $ f $ in $ q $. For small $ \eps>0 $ we have $ \delta_f(v) = \deg f_{q+\eps\nu(q)} + \deg f_{q-\eps\nu(q)} $. If we move $ q $ on the image of $ f $ into $ f(w) $, it passes through a sheet of $ f $ and by the same argument as above, the degrees of  $ f_{q+\eps\nu(q)} $ and $ f_{q-\eps\nu(q)} $ both increase by $ 1 $ or both decrease by $ 1 $.\\
		\ref{lem_delta-f_-f} We identify the images of $ f $ and $ -f $ by the map $ x\mapsto -x $. As $ -f $ has opposite orientation to $ f $, the degrees of the connected components of $ \R^3\setminus (-f)(\S^2) $ and $ \R^3 \setminus f(\S^2) $ satisfy $ \deg_{-f} -C = - \deg_f C $.
	\end{proof}
	
	\begin{ex}\label{ex_eTree}
		For the standard embedding $ e $ we have $ V_e = \{v\},\ E_e = \emptyset $, $ \delta_e(v)=1 $ and for $ -e $ we have $ V_{\m e} = \{v\},\ E_{\m e} = \emptyset $, $ \delta_{\m e}(v)=-1 $. 
	\end{ex}
	\begin{ex}\label{ex_jTree}
		For $ j $ as depicted in Figure~\ref{fig_sketchJ} we have (see Figure~\ref{fig_jTree})
		\begin{align*}
			V_j &= \{v,w_1,w_2\},\\
			E_j &= \{(v,w_1),(v,w_2)\},\\
			P_j((v,w_1)) &= (v,w_2),\\
			\delta_j(v)&=3,\quad \delta_j(w_1)=\delta_j(w_2)=1.
		\end{align*}
		\begin{figure}[H]
			\centering
			\def\dx{2}
			\begin{tikzpicture}
				\def\dd{0.1*\textwidth}
				
				\node[anchor=south] at (0,0) {	\includegraphics[width=0.5\textwidth]{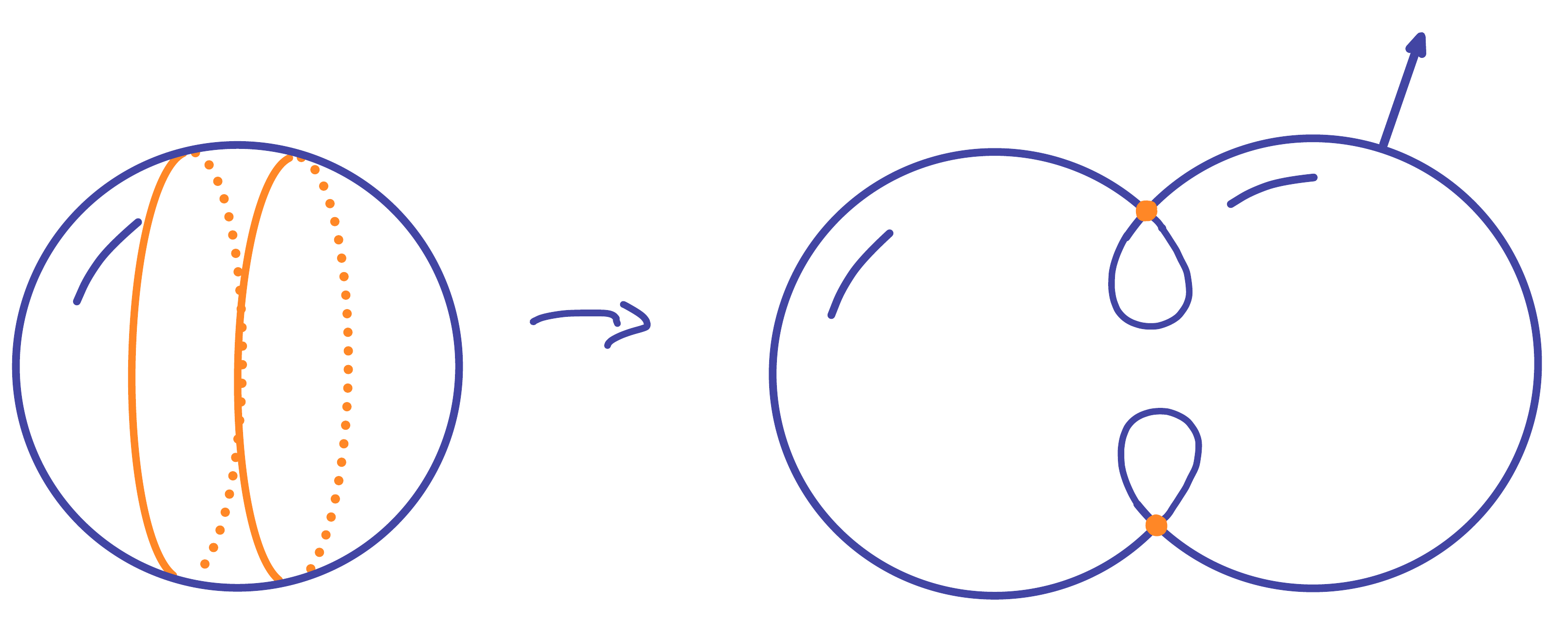}};
				\node[anchor=south] at (-0.63*\dd,1.05*\dd) {\textcolor{myblue}{$ j $}};
				\node[anchor=center] at (0.6*\dd,0.85*\dd) {$ \deg_j = 1 $};
				\node[anchor=south] at (1.16*\dd,1.02*\dd) {$ 2 $};
				\node[anchor=south] at (1.16*\dd,1.6*\dd) {$ 0 $};
				
				
				\node[anchor=south] at (0*\dd,-0.4*\dd) {\textcolor{myblue}{$ G_j $}};
				
				\begin{scope}[shift={(0,-0.6*\dd)}]
					\node[hexnode] (v) at \hexcoord{\dx}{0}{0} {\large $3$};
					\node[hexnode] (w1) at \hexcoord{\dx}{-1}{0} {\large $1$};
					\node[hexnode] (w2) at \hexcoord{\dx}{1}{0} {\large $1$};
					\draw[directed, ultra thick, myorange,>= {Stealth[length=7pt,width=5pt]}] (v) -- (w1);
					\draw[directed, ultra thick, myorange,>= {Stealth[length=7pt,width=5pt]}] (v) -- (w2);
				\end{scope}
				
			\end{tikzpicture}
			\def\dx{\dxdefault}
			\caption{An illustration of the tree $ G_j $ for $ j $ with the values of $ \delta_j $ indicated on the vertices. Matching colors indicate pairs of double point curves.}\label{fig_jTree}
		\end{figure}
	\end{ex}

	\section{Tree Modifications under Regular Homotopies without Triple Points}\label{sec_treeEH}
	We translate the singularities of type E and H to the double point tree to give a classification of the modifications $ G_{H_t} $ undergoes along a regular Homotopy $ H $ without triple points.
	The proofs are mostly combinatorial and elementary except for Lemma~\ref{lem_noSelfTypeH} where we use that every immersed real projective plane in $ \R^3 $ necessarily has triple points \cite{Banch_TPProjections,Banch_TPSurgery}.
	
	During a transverse crossing of an $ E $ singularity a pair of points appears on $ \S^2 $ and grows into a new pair of double point curves, or a pair of double point curves shrinks to points and vanishes. For $ H $ singularities the modification on the double point tree is more involved. Both are covered in Proposition~\ref{prop_classTreeMod} after some preparations to describe the $ H $ singularities.
	
	By the normal forms described in Section~\ref{sec_prelim}, we know the local structure of the double point curves on $ \S^2 $ involved in an $ H $ singularity. There exist two balls on $ \S^2 $ such that, up to diffeomorphism, the double point curves intersecting these balls consist of two transversely intersecting arcs in each ball (see Figure~\ref{fig_HDPCLocalCrossing}). Depending on the orientations of the two surface sheets at the $ H $ singularity, there are two possible cases. We denote these by $ H^1 $ and $ H^2 $, following the notation of \cite{Nowik_OrderOne} (see Figure~\ref{fig_HDPClocal}).
	\begin{figure}[H]
		\centering
		\includegraphics[width=0.6\textwidth]{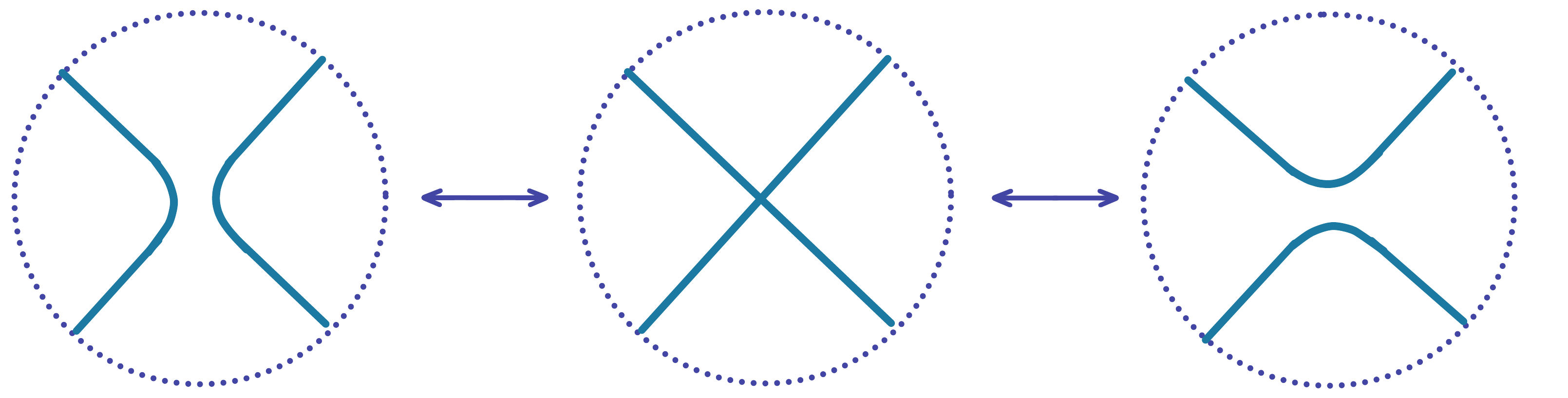}
		\caption{The preimage of one sheet involved in a transverse crossing of an $ H $ singularity.}\label{fig_HDPCLocalCrossing}
	\end{figure}
	\begin{figure}[H]
		\centering
		\hspace{0.02\textwidth}
		\begin{tikzpicture}
			\def\prad{1.3}
			\newcommand{\drawDiagram}[4]{
				\begin{scope}[shift={#1}]
					\draw[dotted, thick,myblue] (0,0) circle(1);
					
					\node at (135:\prad) {\textcolor{mypastel}{$#2_1$}};
					\node at (315:\prad) {\textcolor{mypastel}{$#2_2$}};
					\node at (225:\prad) {\textcolor{myblue}{$#2_3$}};
					\node at (45:\prad)  {\textcolor{myblue}{$#2_4$}};
					
					\draw[->, thick, mypastel] (135:1) -- (315:1);
					\draw[->, thick, myblue] (225:1) -- (45:1);
				\end{scope}
			}
			
			\drawDiagram{(0,0)}{p}{1}{4}
			\drawDiagram{(3,0)}{q}{1}{4}
			\drawDiagram{(8,0)}{p}{1}{4}
			
			\node at (1.5,-1.5) {$ H^1 $};
			\node at (9.5,-1.5) {$ H^2 $};
			
			\begin{scope}[shift={(11,0)}]
				\draw[dotted, thick] (0,0) circle(1);
				\node at (135:\prad) {\textcolor{mypastel}{$q_1$}};
				\node at (315:\prad) {\textcolor{mypastel}{$q_2$}};
				\node at (225:\prad) {\textcolor{myblue}{$q_4$}};
				\node at (45:\prad)  {\textcolor{myblue}{$q_3$}};
				
				\draw[->, thick, mypastel] (135:1) -- (315:1);
				\draw[->, thick, myblue] (45:1) -- (225:1);
			\end{scope}
			
		\end{tikzpicture}
		\hspace{0.02\textwidth}
		\caption{The preimages of an $ H^1 $ and $ H^2 $ singularity on $ \S^2 $.}\label{fig_HDPClocal}
	\end{figure}
	We classify how these intersecting arcs might close up to closed double point curves.
	\begin{lem}\label{lem_Hconnectivity}
		Let $ f\in H \cap\, \Imm_{<3}(\S^2,\R^3) $ be a generic immersion and 
		$$ A:=\{p_1,p_2,p_3,p_4,q_1,q_2,q_3,q_4\}\subseteq \S^2 $$ 
		denote the endpoints of the arcs above such that $ f(p_i)=f(q_i) $ and $ p_1,p_2 $ lie on the same arc as well as $ p_3,p_4 $. 
		Then, these points are connected by double point curves described by some relation $ R\subseteq \left\{S\subseteq A \ \mid\ \abs{S}=2 \right\} $.
		It satisfies 
		\begin{align*}
			R=\left\{\{p_1,p_i\},\ \{p_2,p_j\},\ \{q_1,q_i\},\ \{q_2,q_j\}\right\},
		\end{align*}
		where $ i,j\in\{3,4\} $ and $ i\neq j $.
	\end{lem}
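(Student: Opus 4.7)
The $8$ endpoints of the interior arcs lie on the boundary of the annulus $A = \S^2\setminus(D_p\cup D_q)$, and outside the two disks each double-point curve of $f$ meets $A$ in a single arc connecting two of them; counting endpoints yields $4$ outside arcs and hence a perfect matching $R$ of the $8$ points. The conjugation $\iota\colon p_i\mapsto q_i$ sends outside arcs to outside arcs, so $R$ is $\iota$-invariant. No pair $\{p_i,q_i\}$ can lie in $R$: such an outside arc would sit on a self-conjugate double-point curve, contradicting the absence of triple points (\cite[p.\ 411]{Banch_TPSurgery}). Under $\iota$, $R$ thus decomposes into two orbits of pairs, each of one of three types: (I) both same-side $\{\{p_a,p_b\},\{q_a,q_b\}\}$; (II) one same-side and one cross pair $\{\{p_c,q_d\},\{q_c,p_d\}\}$ with $c\neq d$; or (III) both cross.

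Because $\Imm_{<3}(\S^2,\R^3)$ is open and $f\in H$ has codimension one, $f$ can be perturbed in either $H$-direction to a generic immersion $\tilde f\in\Imm_{<3}$; the outside arcs (hence $R$) are unchanged, while the interior arcs on each disk become $\{p_1,p_3\}\sqcup\{p_2,p_4\}$ or $\{p_1,p_4\}\sqcup\{p_2,p_3\}$. For both $\tilde f$ and $f$ itself, the closed double-point curves on $\S^2$, obtained by concatenating interior and outside arcs, are forbidden from being self-conjugate. The plan is to enumerate the finitely many $R$ of types (II) and (III) and, in each sub-case, trace the resulting closed curves in the three configurations (the two generic perturbations plus the $H$-moment); a direct check shows that at least one configuration produces a closed curve whose vertex set is $\iota$-invariant. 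Since the interior and outside arcs are both $\iota$-invariant as families, such a curve equals its own conjugate, contradicting Banchoff's lemma.

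In the remaining type (I), the two orbits are $\{\{p_1,p_i\},\{q_1,q_i\}\}$ and $\{\{p_2,p_j\},\{q_2,q_j\}\}$ for some $\{i,j\}\subseteq\{2,3,4\}$. If $\{i,j\}\ni 2$, i.e.\ $R\supseteq\{\{p_1,p_2\},\{p_3,p_4\}\}$, then at the $H$-moment the interior arcs $\{p_1,p_2\}$ and $\{p_3,p_4\}$ close up with their outside partners into two distinct simple closed curves on the $p$-side, meeting transversally only at the crossing point in $D_p$; this single transverse intersection contradicts the vanishing of the mod-$2$ intersection form on $\S^2$ and is therefore impossible. Hence $\{i,j\}=\{3,4\}$, which gives exactly the asserted form of $R$. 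The principal obstacle is the somewhat tedious case enumeration in types (II) and (III): each sub-case needs a specific configuration (before, at, or after the $H$-singularity) in which a self-conjugate component is detected. A uniform argument via the induced graph on the $\iota$-quotient of vertices would be preferable, but I do not see one directly.
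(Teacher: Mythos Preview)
Your overall strategy matches the paper's: reduce to a finite enumeration of matchings $R$, rule most of them out via self-conjugate double-point curves (Banchoff's lemma), and treat the ``same-side $\{p_1,p_2\},\{p_3,p_4\}$'' case by the mod-$2$ intersection argument on $\S^2$. However, your case enumeration has a genuine gap. Carry it out for the type~(III) matching
\[
R=\bigl\{\{p_1,q_2\},\{p_2,q_1\},\{p_3,q_4\},\{p_4,q_3\}\bigr\}
\]
in the $H^1$ situation: under either generic resolution (interior arcs $\{p_1,p_3\},\{p_2,p_4\},\{q_1,q_3\},\{q_2,q_4\}$ or the other pairing) the two closed curves one obtains are $\{p_1,p_3,q_4,q_2\}$ and $\{p_2,p_4,q_3,q_1\}$ (respectively $\{p_1,p_4,q_3,q_2\}$ and $\{p_2,p_3,q_4,q_1\}$), which are conjugate to \emph{each other} and not $\iota$-invariant. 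An $\iota$-invariant component appears only at the $H$-moment itself, but there the immersion is not generic: the double-point locus on $\S^2$ is a singular graph passing through the tangency points, and Banchoff's lemma about self-conjugate double-point circles forcing triple points is stated and proved for transverse self-intersections, so it does not apply directly.

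This is precisely the case the paper isolates as its equation~(4.1). The paper disposes of it in a separate Lemma: for $H^2$ one checks that the four outside arcs cannot be embedded disjointly in the annulus (a fact your argument also does not use, since you do not distinguish $H^1$ from $H^2$), while for $H^1$ one needs a genuinely new ingredient. There one deforms $f$ near the tangency, without creating triple points, so that a closed hemisphere bounded by the $\iota$-invariant circle maps as an immersed $\RP^2$ into $\R^3$; Banchoff's theorem that any such immersion has an odd number of triple points then gives the contradiction. Your proposal is missing this step, and the ``direct check'' you defer would in fact uncover exactly this obstruction.
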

\begin{figure}[h]
	\centering
	\includegraphics[width=0.9\textwidth]{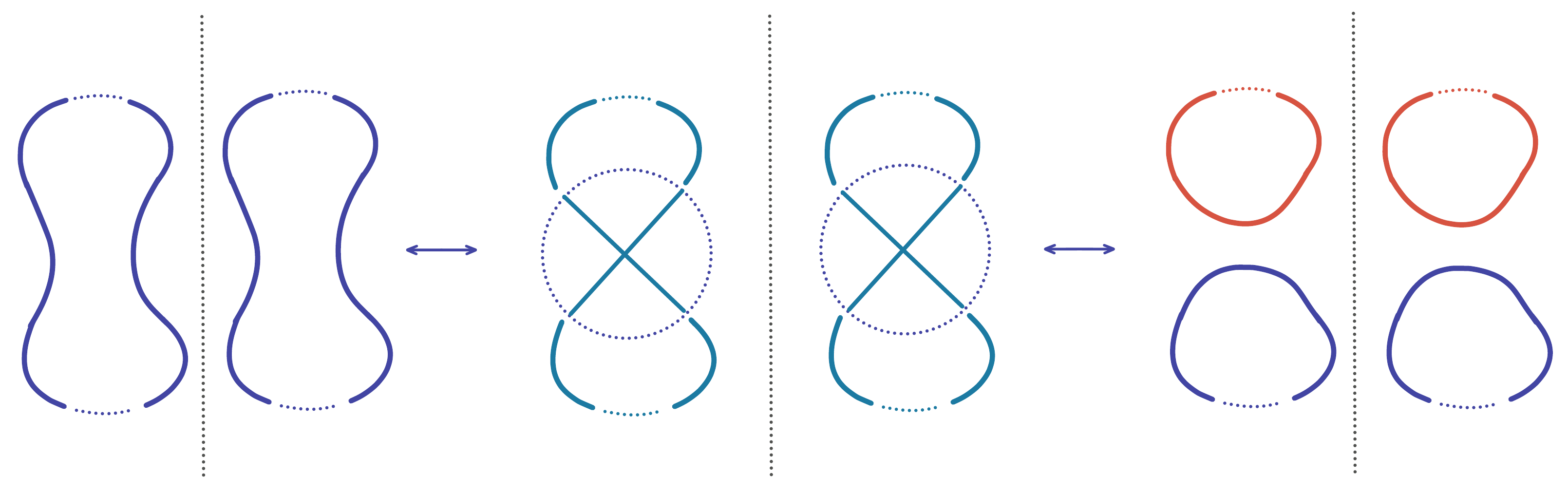}
	\caption{The connectivity of $ H $ singularities according to Lemma~\ref{lem_Hconnectivity}.}\label{fig_Hconnectivity}
\end{figure}
	\begin{proof}
		We first consider the case that there is no connection $ \{p_i,q_j\}\in R $ for any $ i,j $. 
		Then, $ \{p_1,p_2\}\notin R $, otherwise its curve would intersect the curve $ \{p_3,p_4\} $. Now, if $ \{p_1,p_3\}\in R $, then $ \{q_1,q_4\}\notin R $ and $ \{q_1,q_2\}\notin R $, otherwise resolving the $ H $ singularity yields an odd number of double point curves. Analogously, if $ \{p_1,p_4\}\in R $, then $ \{q_1,q_4\}\in R$. We obtain the above listed connections.\\
		For the remaining cases we can assume that $ \{p_1,q_j\}\in R $ for some $ j $. It follows $ j=2 $. Otherwise, resolving the $ H $ singularity yields double point curves which contain both $ p_1 $ and $ q_1 $. Such self-conjugate double-point curves cannot appear (cf.\ Section~\ref{sec_prelim}).
		For the same reason, $ p_2 $ has to be joined to $ q_1 $, and not $ p_3 $ or $ p_4 $.
		Now consider the case of the $ H^2 $ singularity (see Figure~\ref{fig_HDPClocal}). Then, there are no valid connections of the remaining points $ p_3,p_4,q_3,q_4 $ without intersecting each other. However, for the case of $ H^1 $, we obtain the possible configuration (see also Figure~\ref{fig_HselfTypeConn})
		\begin{align}\label{eq_HselfType}
			\left\{\{p_1,q_2\},\{p_2,q_1\},\{p_3,q_4\},\{p_4,q_3\}\right\}.
		\end{align}
		The following Lemma shows that this cannot be the case and the claim follows.
	\end{proof}
	\begin{figure}[H]
		\centering
		\includegraphics[width=0.4\textwidth]{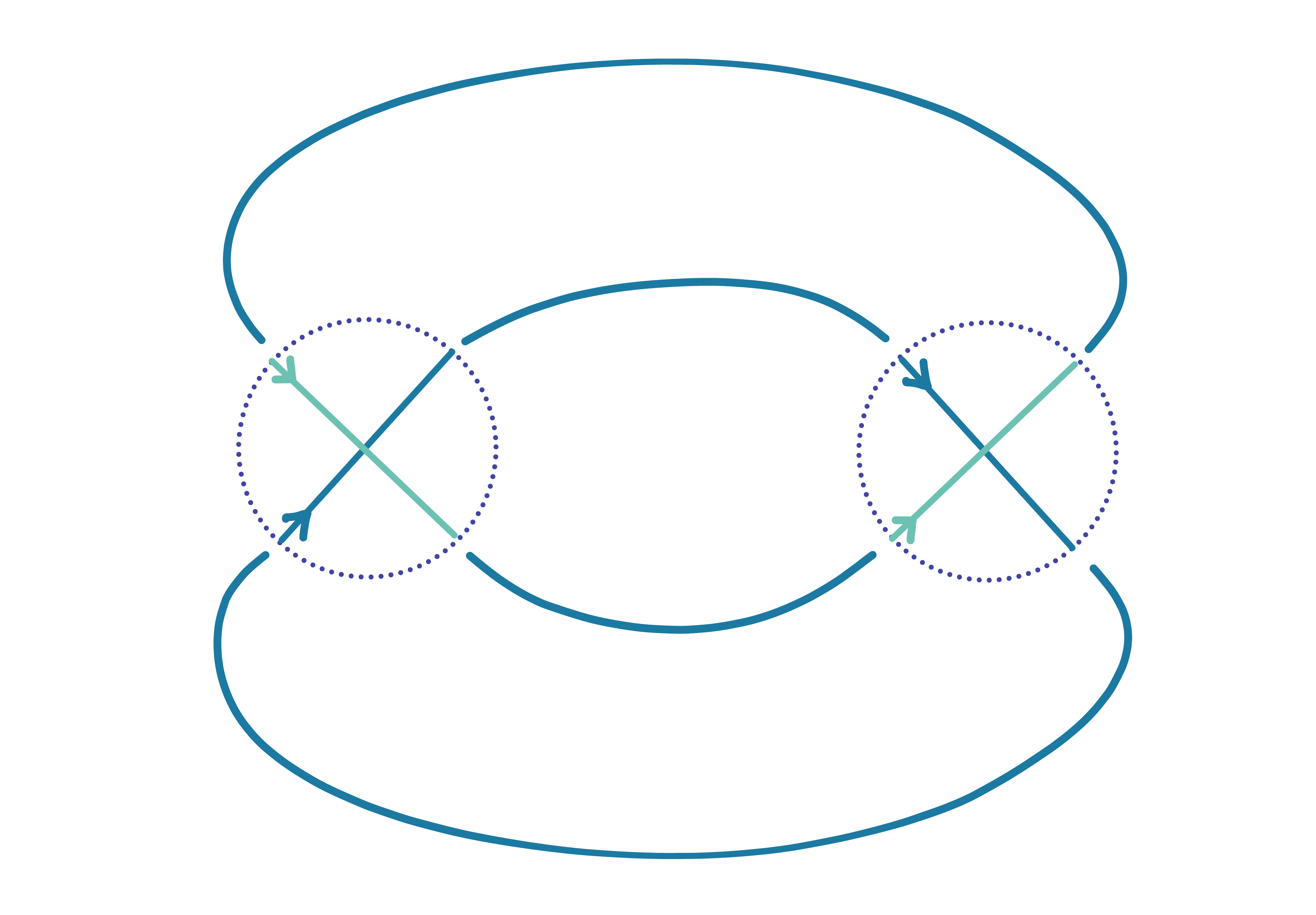}
		\caption{The connections of \eqref{eq_HselfType}.}\label{fig_HselfTypeConn}
	\end{figure}
	\begin{lem}\label{lem_noSelfTypeH}
		A generic immersion $ f\in H\, \cap\, \Imm_{<3}(\S^2,\R^3) $ cannot have an $ H $ singularity of the type described in \eqref{eq_HselfType}.
	\end{lem}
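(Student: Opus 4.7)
The plan is to derive a contradiction from Banchoff's theorem \cite{Banch_TPProjections,Banch_TPSurgery}: every immersion of $\RP^2$ into $\R^3$ has at least one triple point. The strategy extends the standard argument that excludes smooth self-conjugate double-point curves (invoked already in the proof of Lemma~\ref{lem_Hconnectivity}) to the $H$-singular setting. Suppose for contradiction that $f$ realizes the configuration \eqref{eq_HselfType}, which by the preceding argument forces type $H^1$. I trace a closed curve $\gamma \subset \S^2$ by starting at $p_1$, crossing the first local ball along the arc $\{p_1, p_2\}$, following the external double-point arc from $p_2$ to $q_1$, crossing the second local ball along $\{q_1, q_2\}$, and closing up along the external arc from $q_2$ back to $p_1$. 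Because $\gamma$ uses only one of the two transverse arcs in each local ball and the external arcs are disjoint from the balls, $\gamma$ is embedded in $\S^2$, passes through the two tangency preimages $o_1, o_2$, and contains both conjugate pairs $\{p_1, q_1\}$ and $\{p_2, q_2\}$. Consequently $f|_\gamma$ is a degree-two covering of $f(\gamma) \subset \R^3$ whose deck transformation is the conjugate-point involution.

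Pick one of the two disks $D \subset \S^2$ bounded by $\gamma$ and form the quotient $\bar D /{\sim}$ identifying each conjugate pair on $\partial D = \gamma$. The induced involution on $\gamma$ is free and realizes a double cover, so $\bar D/{\sim}$ is homeomorphic to $\RP^2$, and $f|_{\bar D}$ descends to a continuous map $\tilde f : \RP^2 \to \R^3$. It is a smooth immersion on the interior, and $C^1$ at the image of $\{o_1, o_2\}$ because the two sheets share a tangent plane at the tangency. Along the rest of the quotient circle $\gamma/{\sim}$, the two sheets of $f$ meet transversely, so $\tilde f$ has dihedral fold singularities. I remove these folds by a $C^\infty$-small perturbation supported in a tubular neighborhood of $f(\gamma) \setminus \{f(o_1)\}$, chosen narrow enough to avoid every other sheet of $f$; such a neighborhood exists because any sheet that crossed $f(\gamma)$ would already produce a triple point of $f$. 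The resulting smooth immersion $\tilde f' : \RP^2 \to \R^3$ is triple-point-free: a triple point with $k$ preimages in the interior of $D$ and $m$ preimages on $\gamma/{\sim}$ would account for $k+2m \geq 3$ preimages of $f$ in $\S^2$, contradicting $f \in \Imm_{<3}(\S^2, \R^3)$. This contradicts Banchoff's theorem and so rules out configuration \eqref{eq_HselfType}.

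The main technical obstacle is the smoothing step: I must verify that the dihedral fold along $\gamma/{\sim}$ can in fact be smoothed within a tubular neighborhood of $f(\gamma) \setminus \{f(o_1)\}$ that stays disjoint from the remaining double-point structure of $f$, and that the smoothing can be tapered to match the $C^1$ structure already present at $f(o_1)$ so as to produce a globally smooth (or at least $C^1$) immersion of $\RP^2$ in $\R^3$ without creating any additional self-intersections.
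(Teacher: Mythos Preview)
Your approach and the paper's are the same at heart: trace the self-conjugate closed curve $\gamma$ through $p_1,p_2,q_1,q_2$, take one of the disks $D$ it bounds, observe that identifying conjugate boundary points yields $\RP^2$, and invoke Banchoff's theorem. The difference lies entirely in how the $\RP^2$ immersion is made smooth. You restrict first and then try to smooth the resulting dihedral folds along $\gamma/{\sim}$, with the fold angle degenerating to $\pi$ at the tangency point; as you correctly flag, tapering the smoothing there while controlling self-intersections is the delicate step. The paper reverses the order: it first deforms $f$ itself (as an immersion of $\S^2$, without creating triple points) so that a neighbourhood of $\gamma$ maps onto a doubly covered M\"obius band, i.e.\ the two transverse sheets are pushed together until they coincide; after this deformation the restriction $\hat f|_{D}$ is \emph{already} a smooth immersion factoring through $\RP^2$, and no post-hoc fold-smoothing is needed. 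The paper's ``deform then restrict'' order buys exactly the technical point you identify as the main obstacle, and also makes the triple-point count transparent (any triple point of $\hat f|_{D}$ is visibly a triple point of $\hat f$ and hence of $f$). Your route is not wrong, but the smoothing near the tangency and the verification that it introduces no new triple points would need to be spelled out to match the paper's level of rigour.
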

	\begin{proof}
		Note that the double point curve $ \gamma:\S^1\to \S^2 $ that passes through the points $ p_1,p_2,q_1,q_2,p_1 $ in order has some reparametrization such that $ f(\gamma(\alpha)) = f(\gamma(-\alpha)) $ for all $ \alpha\in\S^1 $(see first sketch in Figure~\ref{fig_HselfTypeS1Nbhd}).
		By reparametrizing $ f $, we assume that the double point curves of the $ H $ singularity are two great circles $ S_1=\gamma(\S^1) $ and $ S_2 $ that intersect perpendicular and satisfy $ f(x) = f(-x) $ for $ x $ in a neighborhood of $ S_1\cup S_2 $. We then deform the immersion locally around the self-intersections without introducing triple points such that in a neighborhood of $ S_1 $ the deformed map $ \hat{f} $ parametrizes a double covered moebius band (see first modification in Figure~\ref{fig_HselfTypeFlattening}). Let $ D $ denote one of the closed hemispheres bounded by $ S_1 $. Observe that $ \hat f|_{D} $ parametrizes an immersion of $ \RP^2 $ into $ \R^3 $. By \cite{Banch_TPSurgery} such an immersion has an odd number of triple points, a contradiction to the assumption $ f\in \Imm_{<3}(\S^2,\R^3) $. See Figures~\ref{fig_HselfTypeFlattening} and~\ref{fig_HselfTypeS1Nbhd} for an illustration of this argument.
	\end{proof}
	\begin{figure}[H]
		\centering
		\includegraphics[width=0.85\textwidth]{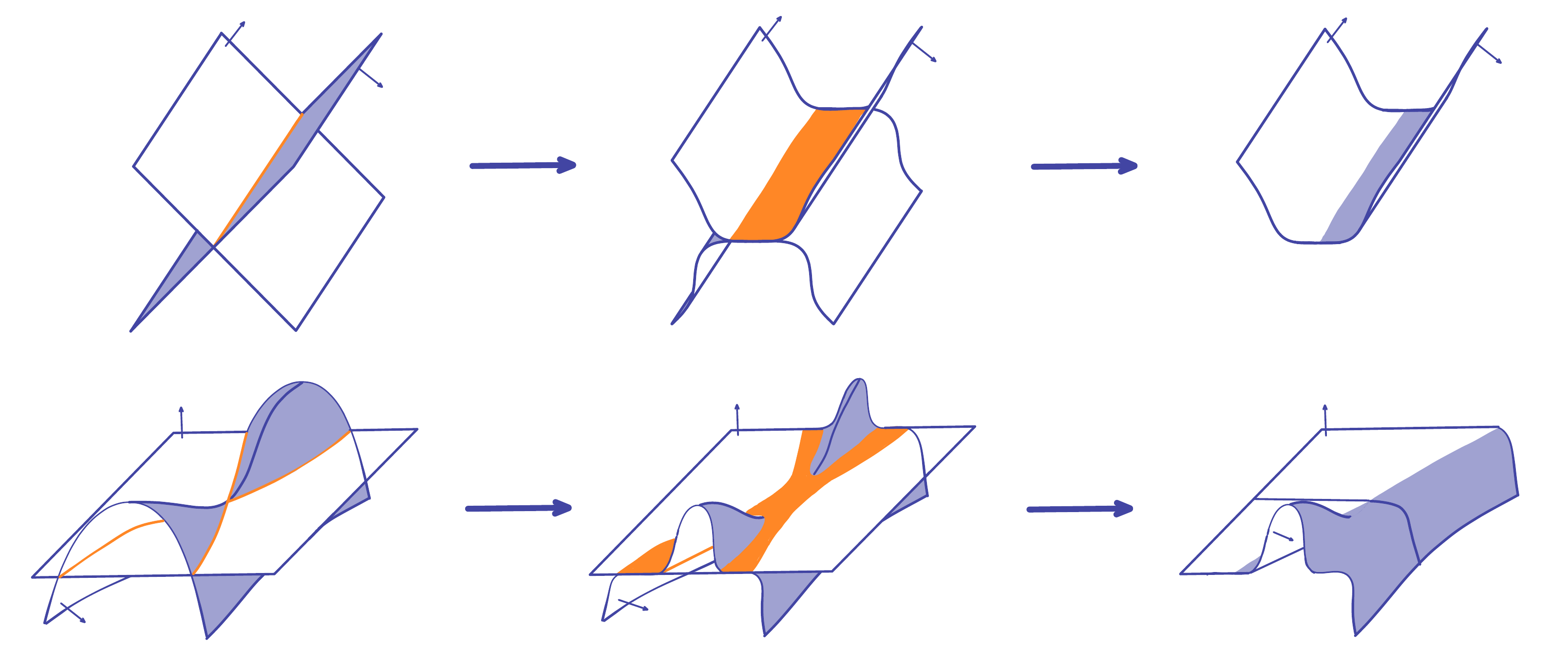}
		\caption{Local neighborhoods of points in $ f(S_1) $, $ \hat f(S_1) $ and $ \hat f|_D(S_1) $ in $ \R^3 $.}\label{fig_HselfTypeFlattening}
	\end{figure}
	\begin{figure}[H]
		\centering
		\includegraphics[width=0.95\textwidth]{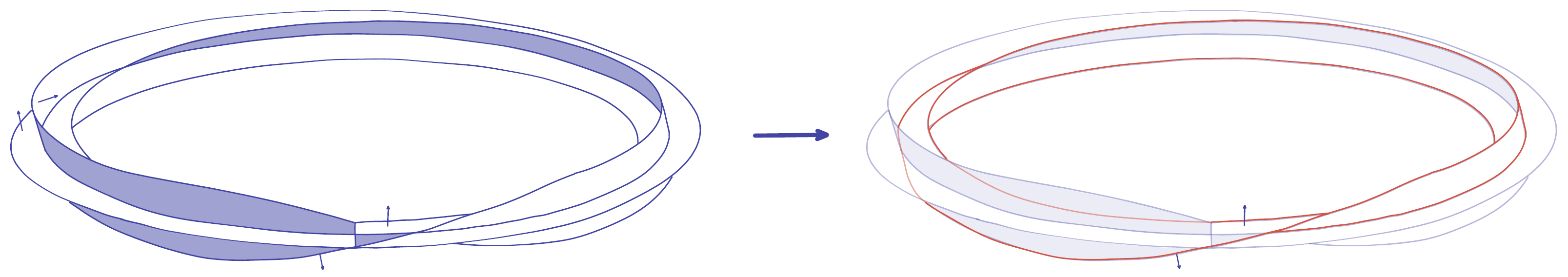}
		\caption{Neighborhoods of $ f(S_1) $ and $\hat f(S_1)|_D$ in $ \R^3 $.}\label{fig_HselfTypeS1Nbhd}
	\end{figure}
	We now carry over the classifications of double point curves during a crossing of an H singularity to the double point tree. First, we give a definition of modifications on double point trees that turn out to be essential to describe H singularities.
	\begin{defn}
		Let $ (G=(V,E),P,\delta) $ be a double point tree.
		\begin{enumerate}[label=\roman*)]
			\item A \textit{(parallel)} $ H_p $\textit{-split} on an edge $ (v,w)\in E $ yields a new double point tree with incomplete pairing. It has an added vertex $ w' $ with $ \delta(w')=\delta(w) $ and an added edge $ (v,w') $. Any edge previously attached to $ w $ may (or may not) be attached to $ w' $ instead, with its orientation and pairing unchanged.
			\item A \textit{(sequential)} $ H_s $\textit{-split} on an edge $ (v,w)\in E $ yields a new double point tree with incomplete pairing. It has an added vertex $ z $ with $ \delta(z)=\delta(v) $ and an added edge $ (w,z) $. Any edge previously attached to $ v $ may (or may not) be attached to $ z $ instead, with its orientation and pairing unchanged. If an edge that lies on the unique path from $ (v,w) $ to $ P((v,w)) $ reattached from $ v $ to $ z $ in this way, then replace $ (v,w) $ and $ (w,z) $ by $ (w,v) $ and $ (z,w) $, respectively.
		\end{enumerate}
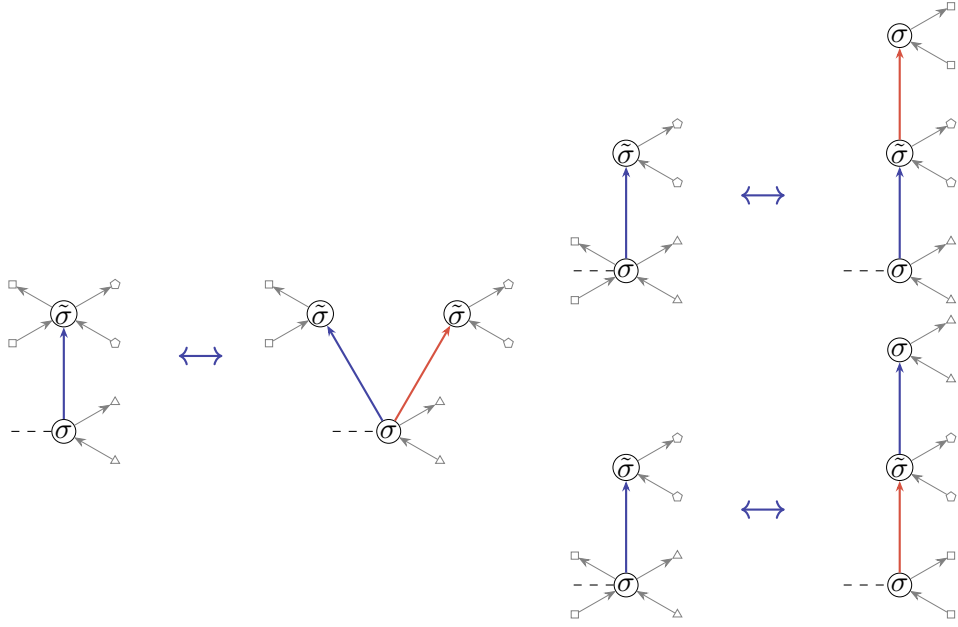
\begin{figure}[H]
	\centering
	\def\dx{0.9}
	\begin{tabular}{l r}
		\multirow{2}{*}{
			\begin{tikzpicture}
				
				\node[hexnode] (v1) at \hexcoord{\dx}{0}{0} {$ \sigma $};
				\node[hexnode] (v2) at \hexcoord{\dx}{-1}{2} {$\tilde{\sigma}$};
				\node[hexnode,draw=none,fill=none] (w) at \hexcoord{\dx}{-1}{0} {};
				\node[trianglenode] (t-) at \hexcoord{\dx}{0.5}{0.5} {};
				\node[trianglenode] (t+) at \hexcoord{\dx}{1}{-0.5} {};
				\node[pentanode] (c-) at \hexcoord{\dx}{-0.5}{2.5} {};
				\node[pentanode] (c+) at \hexcoord{\dx}{0}{1.5} {};
				\node[squarenode] (s-) at \hexcoord{\dx}{-2}{2.5} {};
				\node[squarenode] (s+) at \hexcoord{\dx}{-1.5}{1.5} {};
				
				\draw[undirected] (v1) -- (w);
				\draw[directed, myblue] (v1) -- (v2);
				\draw[directed, gray, very thin] (t+) -- (v1);
				\draw[directed, gray, very thin] (v1) -- (t-);
				\draw[directed, gray, very thin] (s+) -- (v2);
				\draw[directed, gray, very thin] (v2) -- (s-);
				\draw[directed, gray, very thin] (c+) -- (v2);
				\draw[directed, gray, very thin] (v2) -- (c-);
				
				\node[hexnode, draw=none, fill=none] (A1) at (1.5*\dx,1) {};
				\node[hexnode, draw=none, fill=none] (A2) at (2.5*\dx,1) {};
				\draw[bidirected] (A1) -- (A2);
				
				\begin{scope}[shift={(4.75*\dx,0)}]
					\node[hexnode] (v1) at \hexcoord{\dx}{0}{0} {$ \sigma $};
					\node[hexnode] (v2) at \hexcoord{\dx}{-2}{2} {$\tilde{\sigma}$};
					\node[hexnode] (v2p) at \hexcoord{\dx}{0}{2} {$\tilde{\sigma}$};
					\node[hexnode,draw=none,fill=none] (w) at \hexcoord{\dx}{-1}{0} {};
					\node[trianglenode] (t-) at \hexcoord{\dx}{0.5}{0.5} {};
					\node[trianglenode] (t+) at \hexcoord{\dx}{1}{-0.5} {};
					\node[pentanode] (c-) at \hexcoord{\dx}{0.5}{2.5} {};
					\node[pentanode] (c+) at \hexcoord{\dx}{1}{1.5} {};
					\node[squarenode] (s-) at \hexcoord{\dx}{-3}{2.5} {};
					\node[squarenode] (s+) at \hexcoord{\dx}{-2.5}{1.5} {};
					
					\draw[undirected] (w) -- (v1);
					\draw[directed, myblue] (v1) -- (v2);
					\draw[directed, myred] (v1) -- (v2p);
					\draw[directed, gray, very thin] (t+) -- (v1);
					\draw[directed, gray, very thin] (v1) -- (t-);
					\draw[directed, gray, very thin] (s+) -- (v2);
					\draw[directed, gray, very thin] (v2) -- (s-);
					\draw[directed, gray, very thin] (c+) -- (v2p);
					\draw[directed, gray, very thin] (v2p) -- (c-);
				\end{scope}
		\end{tikzpicture}} &
		\begin{tikzpicture}
			
			\node[hexnode] (v1) at \hexcoord{\dx}{0}{0} {$ \sigma $};
			\node[hexnode] (v2) at \hexcoord{\dx}{-1}{2} {$\tilde{\sigma}$};
			\node[hexnode,draw=none,fill=none] (w) at \hexcoord{\dx}{-1}{0} {};
			\node[trianglenode] (t-) at \hexcoord{\dx}{0.5}{0.5} {};
			\node[trianglenode] (t+) at \hexcoord{\dx}{1}{-0.5} {};
			\node[pentanode] (c-) at \hexcoord{\dx}{-0.5}{2.5} {};
			\node[pentanode] (c+) at \hexcoord{\dx}{0}{1.5} {};
			\node[squarenode] (s-) at \hexcoord{\dx}{-1}{0.5} {};
			\node[squarenode] (s+) at \hexcoord{\dx}{-0.5}{-0.5} {};
			
			\draw[undirected] (v1) -- (w);
			\draw[directed, myblue] (v1) -- (v2);
			\draw[directed, gray, very thin] (t+) -- (v1);
			\draw[directed, gray, very thin] (v1) -- (t-);
			\draw[directed, gray, very thin] (s+) -- (v1);
			\draw[directed, gray, very thin] (v1) -- (s-);
			\draw[directed, gray, very thin] (c+) -- (v2);
			\draw[directed, gray, very thin] (v2) -- (c-);
			
			\node[hexnode, draw=none, fill=none] (A1) at (1.5*\dx,1) {};
			\node[hexnode, draw=none, fill=none] (A2) at (2.5*\dx,1) {};
			\draw[bidirected] (A1) -- (A2);
			
			\begin{scope}[shift={(4*\dx,0)}]
				\node[hexnode] (v1) at \hexcoord{\dx}{0}{0} {$ \sigma $};
				\node[hexnode] (v2) at \hexcoord{\dx}{-1}{2} {$\tilde{\sigma}$};
				\node[hexnode] (v2p) at \hexcoord{\dx}{-2}{4} {$\sigma$};
				\node[hexnode,draw=none,fill=none] (w) at \hexcoord{\dx}{-1}{0} {};
				\node[trianglenode] (t-) at \hexcoord{\dx}{0.5}{0.5} {};
				\node[trianglenode] (t+) at \hexcoord{\dx}{1}{-0.5} {};
				\node[pentanode] (c-) at \hexcoord{\dx}{-0.5}{2.5} {};
				\node[pentanode] (c+) at \hexcoord{\dx}{0}{1.5} {};
				\node[squarenode] (s-) at \hexcoord{\dx}{-1.5}{4.5} {};
				\node[squarenode] (s+) at \hexcoord{\dx}{-1}{3.5} {};
				
				\draw[undirected] (w) -- (v1);
				\draw[directed, myblue] (v1) -- (v2);
				\draw[directed, myred] (v2) -- (v2p);
				\draw[directed, gray, very thin] (t+) -- (v1);
				\draw[directed, gray, very thin] (v1) -- (t-);
				\draw[directed, gray, very thin] (s+) -- (v2p);
				\draw[directed, gray, very thin] (v2p) -- (s-);
				\draw[directed, gray, very thin] (c+) -- (v2);
				\draw[directed, gray, very thin] (v2) -- (c-);
			\end{scope}
		\end{tikzpicture}\\
		& \begin{tikzpicture}
			
			\node[hexnode] (v1) at \hexcoord{\dx}{0}{0} {$ \sigma $};
			\node[hexnode] (v2) at \hexcoord{\dx}{-1}{2} {$\tilde{\sigma}$};
			\node[hexnode,draw=none,fill=none] (w) at \hexcoord{\dx}{-1}{0} {};
			\node[trianglenode] (t-) at \hexcoord{\dx}{0.5}{0.5} {};
			\node[trianglenode] (t+) at \hexcoord{\dx}{1}{-0.5} {};
			\node[pentanode] (c-) at \hexcoord{\dx}{-0.5}{2.5} {};
			\node[pentanode] (c+) at \hexcoord{\dx}{0}{1.5} {};
			\node[squarenode] (s-) at \hexcoord{\dx}{-1}{0.5} {};
			\node[squarenode] (s+) at \hexcoord{\dx}{-0.5}{-0.5} {};
			
			\draw[undirected] (v1) -- (w);
			\draw[directed, myblue] (v1) -- (v2);
			\draw[directed, gray, very thin] (t+) -- (v1);
			\draw[directed, gray, very thin] (v1) -- (t-);
			\draw[directed, gray, very thin] (s+) -- (v1);
			\draw[directed, gray, very thin] (v1) -- (s-);
			\draw[directed, gray, very thin] (c+) -- (v2);
			\draw[directed, gray, very thin] (v2) -- (c-);
			
			\node[hexnode, draw=none, fill=none] (A1) at (1.5*\dx,1) {};
			\node[hexnode, draw=none, fill=none] (A2) at (2.5*\dx,1) {};
			\draw[bidirected] (A1) -- (A2);
			
			\begin{scope}[shift={(4*\dx,0)}]
				\node[hexnode] (v1) at \hexcoord{\dx}{0}{0} {$ \sigma $};
				\node[hexnode] (v2) at \hexcoord{\dx}{-1}{2} {$\tilde{\sigma}$};
				\node[hexnode] (v2p) at \hexcoord{\dx}{-2}{4} {$\sigma$};
				\node[hexnode,draw=none,fill=none] (w) at \hexcoord{\dx}{-1}{0} {};
				\node[squarenode] (t-) at \hexcoord{\dx}{0.5}{0.5} {};
				\node[squarenode] (t+) at \hexcoord{\dx}{1}{-0.5} {};
				\node[pentanode] (c-) at \hexcoord{\dx}{-0.5}{2.5} {};
				\node[pentanode] (c+) at \hexcoord{\dx}{0}{1.5} {};
				\node[trianglenode] (s-) at \hexcoord{\dx}{-1.5}{4.5} {};
				\node[trianglenode] (s+) at \hexcoord{\dx}{-1}{3.5} {};
				
				\draw[undirected] (w) -- (v1);
				\draw[directed, myred] (v1) -- (v2);
				\draw[directed, myblue] (v2) -- (v2p);
				\draw[directed, gray, very thin] (t+) -- (v1);
				\draw[directed, gray, very thin] (v1) -- (t-);
				\draw[directed, gray, very thin] (s+) -- (v2p);
				\draw[directed, gray, very thin] (v2p) -- (s-);
				\draw[directed, gray, very thin] (c+) -- (v2);
				\draw[directed, gray, very thin] (v2) -- (c-);
			\end{scope}
		\end{tikzpicture}\\
		
	\end{tabular}
	\caption{An $ \text{H}_p $-split, an $ \text{H}_s $-split, and an $ \text{H}_s $-split where the orientation of the two edges swapped. The dashed line indicates the path to the edge $ P((v,w)) $.}\label{fig_Hsplits}
\end{figure}
\end{defn}
\def\dx{\dxdefault}
	
	\begin{prop}\label{prop_classTreeMod}
		Let $ J:\S^2\times[0,1]\to\R^3$ be a generic regular homotopy without triple points such that $ J_t $ is generic for all $ t\in[0,1]\setminus \{t_0\} $ and $ J_{t_0}\in I_1 $. Denote $ f:=J_0 $ and $ g:=J_1 $. Then, up to interchanging $ f $ and $ g $, the double point tree of $ G_g $ is constructed from $ G_f $ in one of the following ways
		\begin{enumerate}[label=\roman*)]
			\item \label{enum_treeModE}Type $ E $: 
			$ G_g $ has two additional vertices $ w_1, w_2 $ and two edges $ (v_1,w_1),(v_2,w_2) $ for some $ v_1,v_2\in V_f $ with $ \delta(v_1)-\delta(v_2) \in \{-2,0,2\}$. It holds $ P_g((v_1,w_1))=P_g((v_2,w_2)) $, $ \delta(w_i)=\delta(v_i)\pm 2 $ and $ \abs{\delta(\{v_1,v_2,w_1,w_2\})}=2 $.
			\item \label{enum_treeModH}Type $ H $: 
			For a pair of edges $ a_1=(v_1,w_1),a_2=(v_2,w_2)\in E_f $ with $ P_f(a_1)=a_2 $ and $ \abs{\delta(\{v_1,v_2,w_1,w_2\})}=2 $, the double point tree $ G_g $ is obtained from $ G_f $ by an $ \text{H}_p $- or $ \text{H}_s $-split on $ a_1 $ and an $ \text{H}_p $- or $ \text{H}_s $-split on $ a_2 $. The two added edges define a new pair.
		\end{enumerate} 
	\begin{figure}[H]
	\centering
		\begin{tikzpicture}
			\node[hexnode] (v1) at \hexcoord{\dx}{0}{0} {$ \sigma $};
			\node[hexnode] (w1) at \hexcoord{\dx}{1}{0} {$\tau$};
			\draw[undirected] (v1) -- (w1);
			
			\node[hexnode, draw=none, fill=none] (A1) at (2*\dx,0.5) {};
			\node[hexnode, draw=none, fill=none] (A2) at (3*\dx,0.5) {};
			\draw[bidirected] (A1) -- (A2);

			\begin{scope}[shift={(4*\dx,0)}]
				\node[hexnode] (v1) at \hexcoord{\dx}{0}{0} {$ \sigma $};
				\node[hexnode] (v2) at \hexcoord{\dx}{-0.5}{1} {$\tilde{\sigma}$};
				\node[hexnode] (w1) at \hexcoord{\dx}{1}{0} {$\tau$};
				\node[hexnode] (w2) at \hexcoord{\dx}{0.5}{1} {$ \tilde{\tau} $};
				\draw[directed, myblue] (v1) -- (v2);
				\draw[directed, myblue] (w1) -- (w2);
				\draw[undirected] (v1) -- (w1);
			\end{scope}
			
		\end{tikzpicture}
	\caption{The double point tree modification by a transverse crossing of an $ E $ singularity.}\label{fig_treeModE}
\end{figure}
		\begin{figure}[H]
			\centering
			\begin{tabular}{l}
			\begin{tikzpicture}
				\node[hexnode] (v1) at \hexcoord{\dx}{0}{0} {$ \sigma $};
				\node[hexnode] (v2) at \hexcoord{\dx}{-0.5}{1} {$\tilde{\sigma}$};
				\node[hexnode] (w1) at \hexcoord{\dx}{1}{0} {$\tau$};
				\node[hexnode] (w2) at \hexcoord{\dx}{0.5}{1} {$ \tilde{\tau} $};
				\draw[directed, myblue] (v1) -- (v2);
				\draw[directed, myblue] (w1) -- (w2);
				\draw[undirected] (v1) -- (w1);
				
				\node[hexnode, draw=none, fill=none] (A1) at (2*\dx,0.5) {};
				\node[hexnode, draw=none, fill=none] (A2) at (3*\dx,0.5) {};
				\draw[bidirected] (A1) -- (A2);

				\begin{scope}[shift={(4*\dx,0)}]
				\node[hexnode] (v1) at \hexcoord{\dx}{0}{0} {$ \sigma $};
				\node[hexnode] (v2) at \hexcoord{\dx}{-1}{1} {$\tilde{\sigma}$};
				\node[hexnode] (v2p) at \hexcoord{\dx}{0}{1} {$\tilde{\sigma}$};
				\node[hexnode] (w1) at \hexcoord{\dx}{1.5}{0} {$\tau$};
				\node[hexnode] (w2) at \hexcoord{\dx}{1.5}{1} {$ \tilde{\tau} $};
				\node[hexnode] (w2p) at \hexcoord{\dx}{0.5}{1} {$ \tilde{\tau} $};
				\draw[directed, myblue] (v1) -- (v2);
				\draw[directed, myred] (v1) -- (v2p);
				\draw[directed, myblue] (w1) -- (w2);
				\draw[directed, myred] (w1) -- (w2p);
				\draw[undirected] (v1) -- (w1);
				\end{scope}
				
			\end{tikzpicture}\\
			\begin{tikzpicture}
				
				\node[hexnode] (v1) at \hexcoord{\dx}{0}{0} {$ \sigma $};
				\node[hexnode] (v2) at \hexcoord{\dx}{-0.5}{1} {$\tilde{\sigma}$};
				\node[hexnode] (w1) at \hexcoord{\dx}{1}{0} {$\tau$};
				\node[hexnode] (w2) at \hexcoord{\dx}{0.5}{1} {$ \tilde{\tau} $};
				\draw[directed, myblue] (v1) -- (v2);
				\draw[directed, myblue] (w1) -- (w2);
				\draw[undirected] (v1) -- (w1);
				
				\node[hexnode, draw=none, fill=none] (A1) at (2*\dx,0.5) {};
				\node[hexnode, draw=none, fill=none] (A2) at (3*\dx,0.5) {};
				\draw[bidirected] (A1) -- (A2);

				\begin{scope}[shift={(4*\dx,0)}]
					\node[hexnode] (v1) at \hexcoord{\dx}{0}{0} {$ \sigma $};
					\node[hexnode] (v2) at \hexcoord{\dx}{-1}{1} {$\tilde{\sigma}$};
					\node[hexnode] (v2p) at \hexcoord{\dx}{0}{1} {$\tilde{\sigma}$};
					\node[hexnode] (w1) at \hexcoord{\dx}{1.5}{0} {$\tau$};
					\node[hexnode] (w2) at \hexcoord{\dx}{1}{1} {$ \tilde{\tau} $};
					\node[hexnode] (w2p) at \hexcoord{\dx}{0.5}{2} {$ \tau $};
					\draw[directed, myblue] (v1) -- (v2);
					\draw[directed, myred] (v1) -- (v2p);
					\draw[directed, myblue] (w1) -- (w2);
					\draw[directed, myred] (w2) -- (w2p);
					\draw[undirected] (v1) -- (w1);
				\end{scope}
				
			\end{tikzpicture}\\
			\begin{tikzpicture}
				
				\node[hexnode] (v1) at \hexcoord{\dx}{0}{0} {$ \sigma $};
				\node[hexnode] (v2) at \hexcoord{\dx}{-0.5}{1} {$\tilde{\sigma}$};
				\node[hexnode] (w1) at \hexcoord{\dx}{1}{0} {$\tau$};
				\node[hexnode] (w2) at \hexcoord{\dx}{0.5}{1} {$ \tilde{\tau} $};
				\draw[directed, myblue] (v1) -- (v2);
				\draw[directed, myblue] (w1) -- (w2);
				\draw[undirected] (v1) -- (w1);
				
				\node[hexnode, draw=none, fill=none] (A1) at (2*\dx,0.5) {};
				\node[hexnode, draw=none, fill=none] (A2) at (3*\dx,0.5) {};
				\draw[bidirected] (A1) -- (A2);

				\begin{scope}[shift={(4*\dx,0)}]
					\node[hexnode] (v1) at \hexcoord{\dx}{0}{0} {$ \sigma $};
					\node[hexnode] (v2) at \hexcoord{\dx}{-0.5}{1} {$\tilde{\sigma}$};
					\node[hexnode] (v2p) at \hexcoord{\dx}{-1}{2} {$\sigma$};
					\node[hexnode] (w1) at \hexcoord{\dx}{1.5}{0} {$\tau$};
					\node[hexnode] (w2) at \hexcoord{\dx}{1}{1} {$ \tilde{\tau} $};
					\node[hexnode] (w2p) at \hexcoord{\dx}{0.5}{2} {$ \tau $};
					\draw[directed, myblue] (v1) -- (v2);
					\draw[directed, myred] (v2) -- (v2p);
					\draw[directed, myblue] (w1) -- (w2);
					\draw[directed, myred] (w2) -- (w2p);
					\draw[undirected] (v1) -- (w1);
				\end{scope}
				
			\end{tikzpicture}\\
			\begin{tikzpicture}
				
				\node[hexnode] (v1) at \hexcoord{\dx}{0}{0} {$ \sigma $};
				\node[hexnode] (v2) at \hexcoord{\dx}{-0.5}{1} {$\tilde{\sigma}$};
				\node[hexnode] (w1) at \hexcoord{\dx}{1}{0} {$\tau$};
				\node[hexnode] (w2) at \hexcoord{\dx}{0.5}{1} {$ \tilde{\tau} $};
				\draw[directed, myblue] (v1) -- (v2);
				\draw[directed, myblue] (w1) -- (w2);
				\draw[undirected, black, dashed, thin] (v1) -- (w1);
				
				\node[hexnode, draw=none, fill=none] (A1) at (2*\dx,0.5) {};
				\node[hexnode, draw=none, fill=none] (A2) at (3*\dx,0.5) {};
				\draw[bidirected] (A1) -- (A2);

				\begin{scope}[shift={(4*\dx,0)}]
					\node[hexnode] (v1) at \hexcoord{\dx}{0}{0} {$ \sigma $};
					\node[hexnode] (v2) at \hexcoord{\dx}{-0.5}{1} {$\tilde{\sigma}$};
					\node[hexnode] (v2p) at \hexcoord{\dx}{-1}{2} {$\sigma$};
					\node[hexnode] (w1) at \hexcoord{\dx}{1.5}{0} {$\tau$};
					\node[hexnode] (w2) at \hexcoord{\dx}{1}{1} {$ \tilde{\tau} $};
					\node[hexnode] (w2p) at \hexcoord{\dx}{0.5}{2} {$ \tau $};
					\draw[directed, myred] (v1) -- (v2);
					\draw[directed, myblue] (v2) -- (v2p);
					\draw[directed, myblue] (w1) -- (w2);
					\draw[directed, myred] (w2) -- (w2p);
					\draw[undirected] (v1) -- (w1);
				\end{scope}
				
			\end{tikzpicture}
			\end{tabular}
			\caption{The double point tree modifications by a transverse crossing of an $ H $ singularity.}\label{fig_treeModH}
		\end{figure}
	\end{prop}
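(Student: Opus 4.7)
The plan is to treat the two types of strata separately, since $J$ has no triple points and $J_{t_0}$ is generic, so $J_{t_0} \in E \cup H$ by the classification of codimension-one strata from Section~\ref{sec_prelim}. Up to interchanging $f$ and $g$, I may assume that the transverse crossing goes from a simpler to a more complicated configuration on $\S^2$.

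For type $E$: Near the singular point, a pair of tangent points on $\S^2$ opens up into a pair of small circles bounding two small disks, one sitting inside each of two (not necessarily distinct) components $v_1, v_2 \in V_f$. These small disks are the new vertices $w_1, w_2$, and the two new circles yield edges $(v_1, w_1), (v_2, w_2)$, oriented toward $w_i$ by the definition in Section~\ref{sec_tree}. The two arcs at the tangency are paired under $P_g$ by construction. For the degrees, I would apply Lemma~\ref{lem_delta-f}: pushing a basepoint through the newly created sheet changes $\deg_g$ by $\pm 1$, and since both new components lie between the same two sheets, the signs are coherent. This gives $\delta(w_i) = \delta(v_i) \pm 2$ (with the same sign for $i = 1,2$), and then $|\delta(\{v_1,v_2,w_1,w_2\})| = 2$ together with $\delta(v_1) - \delta(v_2) \in \{-2, 0, 2\}$ follows by a parity count using Lemma~\ref{lem_delta-f}~\ref{lem_delta-f_odd}.

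For type $H$: By Lemmas~\ref{lem_Hconnectivity} and~\ref{lem_noSelfTypeH}, the two arcs intersecting at the $H$ point lie on two paired double point curves $a_1, a_2 \in E_f$ with $P_f(a_1) = a_2$, and the crossing performs a local surgery that reconnects the four endpoints. Globally on $\S^2$, this surgery turns each of the two paired curves into two curves. I would identify each of the two resulting splits combinatorially by tracking which side of the original disk/annulus decomposition the new curve bounds: if the new double point curve lies parallel to the old one, bounding a disk that shares its parent vertex, it corresponds to an $H_p$-split; if it wraps around the original disk, nesting it, it corresponds to an $H_s$-split. The two added edges form the new pair, and the four involved vertices take only two distinct $\delta$-values by the same local degree argument as in the $E$ case.

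The main obstacle I expect is the orientation clause of the $H_s$-split, namely the rule that flipping an edge of the path from $a_1$ to $P_f(a_1)$ from $v$ to the new vertex $z$ forces $(v,w)$ and $(w,z)$ to reverse to $(w,v)$ and $(z,w)$. The reason is geometric: reassigning this edge changes which of the two disks bounded by the new pair contains the conjugate curve, so the disk/annulus roles of the adjacent vertices swap and the orientation convention from Section~\ref{sec_tree} reverses accordingly. The bulk of the proof will consist of verifying this orientation bookkeeping across the finitely many subcases of Lemma~\ref{lem_Hconnectivity} (parallel vs.\ sequential, and within $H_s$, whether the reattached edge crosses the conjugate path); aside from this, the rest of the argument is routine case analysis on the local picture of $\S^2$ together with repeated use of Lemma~\ref{lem_delta-f}.
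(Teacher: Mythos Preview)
Your approach is essentially the same as the paper's: both treat $E$ and $H$ separately, derive the $E$ modification directly from the local normal form, and base the $H$ analysis on Lemma~\ref{lem_Hconnectivity} (with Lemma~\ref{lem_noSelfTypeH} ruling out the self-type configuration). Two small points are worth flagging.

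First, your claim in the $E$ case that the signs in $\delta(w_i)=\delta(v_i)\pm 2$ agree for $i=1,2$ is not correct in general: when $\delta(v_1)\neq\delta(v_2)$ (say $\delta(v_1)=\sigma$, $\delta(v_2)=\sigma-2$), the constraint $\abs{\delta(\{v_1,v_2,w_1,w_2\})}=2$ forces $\delta(w_1)=\sigma-2$ and $\delta(w_2)=\sigma$, i.e.\ opposite signs. The paper does not attempt to pin down the signs individually; it simply reads off the two-value constraint from the local degree picture for each orientation type, which is what you should do as well.

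Second, for the $H$ case the paper organizes the orientation bookkeeping a bit more cleanly than your proposed case-check. Rather than verifying the $H_s$ reversal rule directly across subcases, it passes to the side of the crossing where each figure-eight resolves into two curves, obtaining two local three-vertex subtrees $C_1,C_2$. In each $C_i$ there is a unique vertex closest (in the tree) to the other subtree, and the two edges of $C_i$ are then automatically directed away from that vertex by the disk/annulus convention of Section~\ref{sec_tree}. This single observation replaces the per-case orientation checks you anticipate and yields the list in Figure~\ref{fig_treeModH} directly; your approach would also work but is more laborious.
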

	\begin{proof}
		The different types arise from the case distinction $ J_{t_0}\in E $ and $ J_{t_0}\in H $.\\
		\ref{enum_treeModE} This is straight-forward from considering the preimages of the local picture of an $ E $ singularity (see Figure~\ref{fig_EHTQ}). The restrictions on the degrees is obtained from studying the degrees of the components in this local picture for both cases of orientation.\\
		\ref{enum_treeModH} From Lemma~\ref{lem_Hconnectivity} (and Figure~\ref{fig_Hconnectivity}) we deduce the tree changes with specified degrees $ \sigma,\tilde{\sigma},\tau,\tilde{\tau} $ depicted in Figure~\ref{fig_HtreeMod}.
		Consider the two connected subtrees $ C_1 $ and $ C_2 $ with three vertices each, when resolving the $ H $ singularity to two double point curves instead of one (see Figure~\ref{fig_HtreeMod}). Then, there exists $ v_1\in C_1 $ which lies closest to $ C_2 $, and vice versa, there exists $ v_2\in C_2 $ closest to $ C_1 $. It may be that $ v_1=v_2 $ or that $ v_1 $ is joined to $ v_2 $ by an edge or a path of multiple edges (i.e.\ other double point curves encapsulating one of the two figure eights). For each choice of $ v_i $ the edges in $ C_i $ are directed away from $ v_i $ (see Figure~\ref{fig_HtreeModCases}).
		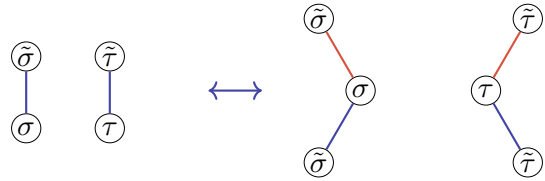
\begin{figure}[H]
			\centering
			\begin{tikzpicture}
				\node[hexnode] (w1) at \hexcoord{\dx}{0}{0} {$ \sigma $};
				\node[hexnode] (w2) at \hexcoord{\dx}{-0.5}{1} {$\tilde{\sigma}$};
				\node[hexnode] (z1) at \hexcoord{\dx}{1}{0} {$ \tau $};
				\node[hexnode] (z2) at \hexcoord{\dx}{0.5}{1} {$\tilde{\tau}$};
				
				\draw[undirected, solid, thick, myblue] (w1) -- (w2);
				\draw[undirected, solid, thick, myblue] (z1) -- (z2);
				
				\node[hexnode, draw=none, fill=none] (A1) at (2*\dx,0.5) {};
				\node[hexnode, draw=none, fill=none] (A2) at (3*\dx,0.5) {};
				\draw[bidirected] (A1) -- (A2);
				
				\begin{scope}[shift={(4*\dx,0.5)}]
				\node[hexnode] (w1) at \hexcoord{\dx}{0}{0} {$ \sigma $};
				\node[hexnode] (w2) at \hexcoord{\dx}{-1}{1} {$\tilde{\sigma}$};
				\node[hexnode] (w3) at \hexcoord{\dx}{0}{-1} {$\tilde{\sigma}$};
				\node[hexnode] (z1) at \hexcoord{\dx}{1.5}{0} {$ \tau $};
				\node[hexnode] (z2) at \hexcoord{\dx}{1.5}{1} {$\tilde{\tau}$};
				\node[hexnode] (z3) at \hexcoord{\dx}{2.5}{-1} {$\tilde{\tau}$};
				
				\draw[undirected, thick, solid, myblue] (w1) -- (w3);
				\draw[undirected, thick, solid, myred] (w1) -- (w2);
				
				\draw[undirected, thick, solid, myblue] (z1) -- (z3);
				\draw[undirected, thick, solid, myred] (z1) -- (z2);
				\end{scope}
				
			\end{tikzpicture}
		\caption{The four (resp.\ six) vertices taking part in the $ H $ singularity.}\label{fig_HtreeMod}
		\end{figure}
		\def\dx{0.9}
		\begin{figure}[H]
			\centering
			\begin{tabular}{c c}
				\subfigure[]{
					\begin{minipage}[c][0.3\textwidth][c]{0.375\textwidth}
						\centering
					\begin{tabular}{c c}
						\begin{tikzpicture}
							\node[hexnode] (w1) at \hexcoord{\dx}{0}{0} {$ \sigma $};
							\node[hexnode] (w2) at \hexcoord{\dx}{-1}{1} {$\tilde{\sigma}$};
							\node[hexnode] (w3) at \hexcoord{\dx}{0}{-1} {$\tilde{\sigma}$};
							\node[hexnode] (z1) at \hexcoord{\dx}{1.5}{0} {$ \tau $};
							\node[hexnode] (z2) at \hexcoord{\dx}{1.5}{1} {$\tilde{\tau}$};
							\node[hexnode] (z3) at \hexcoord{\dx}{2.5}{-1} {$\tilde{\tau}$};
							
							\draw[directed, myblue] (w1) -- (w3);
							\draw[directed, thick, solid, myred] (w1) -- (w2);
							
							\draw[directed, myblue] (z1) -- (z3);
							\draw[directed, myred] (z1) -- (z2);
							
							\draw[undirected] (w1) -- (z1);
							
						\end{tikzpicture} &
						\begin{tikzpicture}
							\node[hexnode] (w1) at \hexcoord{\dx}{0}{0} {$ \sigma $};
							\node[hexnode] (w2) at \hexcoord{\dx}{-1}{1} {$\tilde{\sigma}$};
							\node[hexnode] (w3) at \hexcoord{\dx}{0}{-1} {$\tilde{\sigma}$};
							\node[hexnode] (z1) at \hexcoord{\dx}{1.5}{0} {$   \tau$};
							\node[hexnode] (z2) at \hexcoord{\dx}{1.5}{1} {$ \tilde \tau$};
							\node[hexnode] (z3) at \hexcoord{\dx}{2.5}{-1} {$\tilde \tau$};
							
							\draw[directed, myblue] (w3) -- (w1);
							\draw[directed, myred] (w1) -- (w2);
							
							\draw[directed, myblue] (z3) -- (z1);
							\draw[directed, myred] (z1) -- (z2);
							
							\draw[undirected] (w3) -- (z3);
							
						\end{tikzpicture} \\
						\begin{tikzpicture}
							\node[hexnode] (w1) at \hexcoord{\dx}{0}{0} {$ \sigma $};
							\node[hexnode] (w2) at \hexcoord{\dx}{-1}{1} {$\tilde{\sigma}$};
							\node[hexnode] (w3) at \hexcoord{\dx}{0}{-1} {$\tilde{\sigma}$};
							\node[hexnode] (z1) at \hexcoord{\dx}{1.5}{0} {$ \tau $};
							\node[hexnode] (z2) at \hexcoord{\dx}{1.5}{1} {$\tilde{\tau}$};
							\node[hexnode] (z3) at \hexcoord{\dx}{2.5}{-1} {$\tilde{\tau}$};
							
							\draw[directed, myblue] (w3) -- (w1);
							\draw[directed, myred] (w1) -- (w2);
							
							\draw[directed, myblue] (z1) -- (z3);
							\draw[directed, myred] (z1) -- (z2);
							
							\draw[undirected] (w3) -- (z1);
							
						\end{tikzpicture} &
						\begin{tikzpicture}
							\node[hexnode] (w1) at \hexcoord{\dx}{0}{0} {$ \sigma $};
							\node[hexnode] (w2) at \hexcoord{\dx}{-1}{1} {$\tilde{\sigma}$};
							\node[hexnode] (w3) at \hexcoord{\dx}{0}{-1} {$\tilde{\sigma}$};
							\node[hexnode] (z1) at \hexcoord{\dx}{1.5}{0} {$ \tau $};
							\node[hexnode] (z2) at \hexcoord{\dx}{1.5}{1} {$\tilde{\tau}$};
							\node[hexnode] (z3) at \hexcoord{\dx}{2.5}{-1} {$\tilde{\tau}$};
							
							\draw[directed, myblue] (w3) -- (w1);
							\draw[directed, myred] (w1) -- (w2);
							
							\draw[directed, myblue] (z1) -- (z3);
							\draw[directed, myred] (z2) -- (z1);
							\draw[undirected] (w3) -- (z2);
							
						\end{tikzpicture} 
					\end{tabular}		
					\end{minipage}
				}
				&
				\subfigure[]{
					\begin{minipage}[c][0.3\textwidth][c]{0.525\textwidth}
						\centering
						\includegraphics[width=\textwidth]{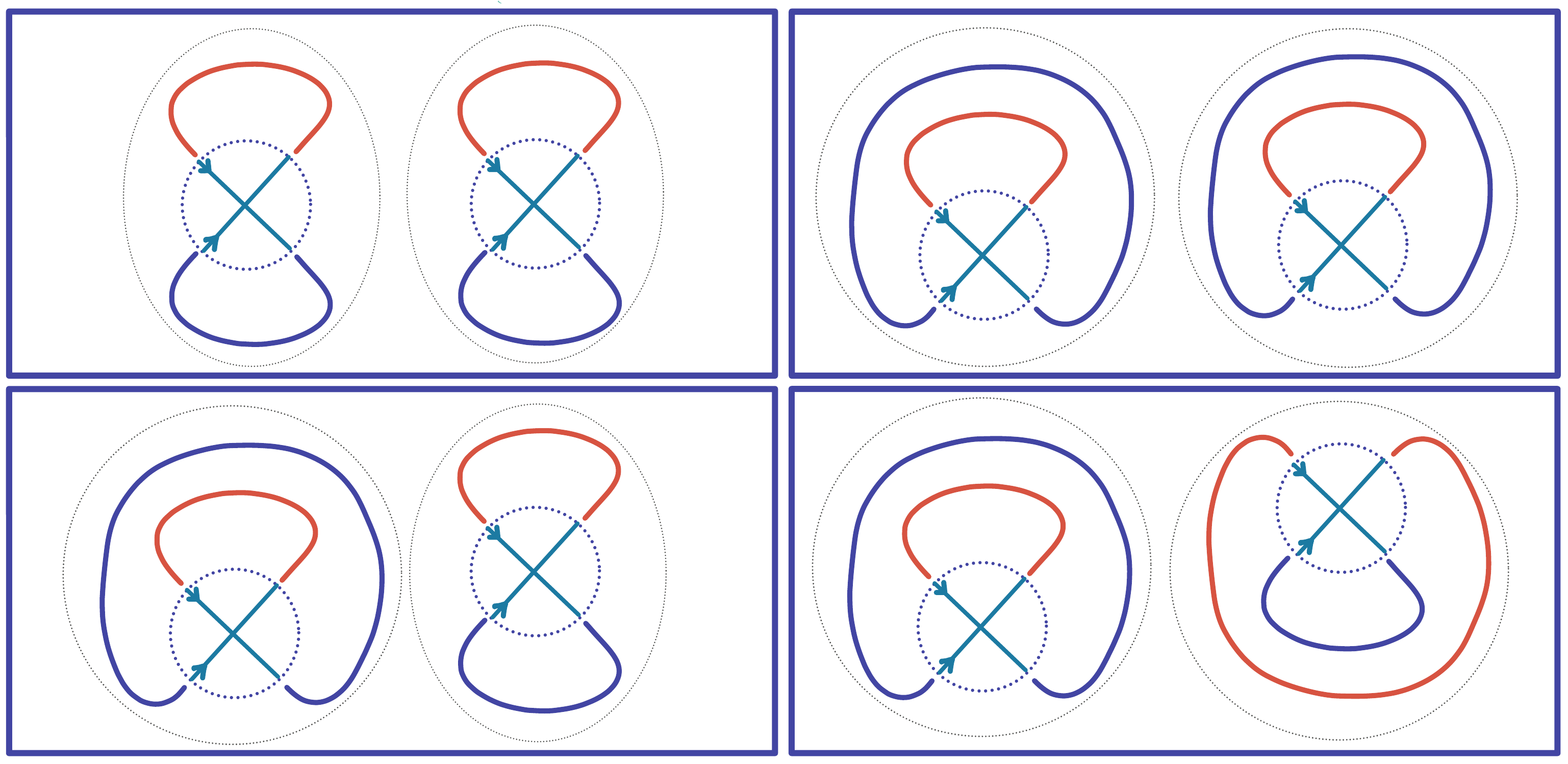}
					\end{minipage}
				}
			\end{tabular}
			\caption{The different cases of the modification of an $ H $ singularity on the double point tree.}\label{fig_HtreeModCases}
		\end{figure}
		We arrive at the above described modifications displayed in Figure~\ref{fig_treeModH}.
		The components of degree $ \tilde \sigma $ and $ \tilde \tau $ split into two components each. Any double point curve originally contained within such a component, is afterwards contained in one of the two new components.
		The restriction $ \abs{\delta(\{v_1,v_2,w_1,w_2\})}=2 $ on the degrees, again, comes from studying the local picture of an $ H $ singularity (see Figure~\ref{fig_EHTQ}) for both cases of orientation, $ H^1 $ and $ H^2 $.
	\end{proof}
	
	\begin{rem}
		Every modification of a double point tree as described in Proposition~\ref{prop_classTreeMod} results in a new double point tree. However, the resulting tree is not necessarily realizable (see Figure~\ref{fig_exTreeModNotAllowed} for an example, verifiable via the classification of \cite{Lippner}).
		Nonetheless, for each of the four cases in Figure~\ref{fig_treeModH} one can find regular homotopies realizing the corresponding tree modification.
	\end{rem}
	\begin{figure}[H]
		\centering
		\begin{tikzpicture}
			
			\node[hexnode] (v1) at \hexcoord{\dx}{0}{0} {$ 1 $};
			\node[hexnode] (w1) at \hexcoord{\dx}{0}{1} {$0$};
			\node[hexnode] (w2) at \hexcoord{\dx}{-1}{1} {$0$};
			\draw[directed, myblue] (v1) -- (w1);
			\draw[directed, myblue] (v1) -- (w2);
			\draw[undirected] (v1) -- (w1);
			
			\node[hexnode, draw=none, fill=none] (A1) at (1*\dx,0.5) {};
			\node[hexnode, draw=none, fill=none] (A2) at (2*\dx,0.5) {};
			\node[anchor=south] at (1.5*\dx,0.5) {$ H $};
			\draw[bidirected,->] (A1) -- (A2);

			\begin{scope}[shift={(3*\dx,0)}]
				\node[hexnode] (v1) at \hexcoord{\dx}{0}{0} {$ 1 $};
				\node[hexnode] (w1) at \hexcoord{\dx}{0}{1} {$0$};
				\node[hexnode] (w2) at \hexcoord{\dx}{-1}{1} {$0$};
				\node[hexnode] (z1) at \hexcoord{\dx}{0}{2} {$1$};
				\node[hexnode] (z2) at \hexcoord{\dx}{-2}{2} {$ 1 $};
				\draw[directed, myred] (v1) -- (w1);
				\draw[directed, myblue] (v1) -- (w2);
				\draw[directed, myblue] (w1) -- (z1);
				\draw[directed, myred] (w2) -- (z2);
			\end{scope}
			
		\end{tikzpicture}
		\caption{A non-realizable double point tree obtained from a realizable double point tree by a modification of Proposition~\ref{prop_classTreeMod}.}\label{fig_exTreeModNotAllowed}
	\end{figure}
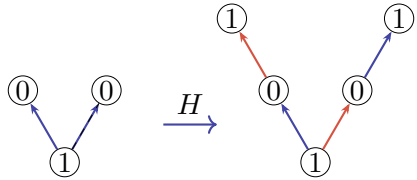
	
	\section{Proof of Theorem~\ref{thm_Invariant}}\label{sec_Invariant}
	\begin{proof}[Proof of Theorem~\ref{thm_Invariant}]
		As discussed in Section~\ref{sec_prelim}, one can perturb the regular homotopy to a generic one and thus it suffices to show that $ F $ is unchanged by transverse crossings of the strata of types E and H. Let $ f,g\in \Imm_{<3}(\S^2,\R^3) $ be generic immersions such that they are joined by a generic regular homotopy with exactly one singular time as in Proposition~\ref{prop_classTreeMod}. We make a case distinction by type of the singularity.
		
		Type E:\\
		The added (resp.\ removed) vertices have indegrees of $ 1 $ and the indegrees of all other vertices remain unchanged. Therefore, $ F(f) = F(g) $.
		
		Type H:\\
		\begin{figure}[H]
			\centering
				\def\dx{0.9}
				\begin{tikzpicture}
					
					\node at (0,-1*\dx) {$ G_f $};
					
					\node[hexnode] (v1) at \hexcoord{\dx}{0}{0} {$ \sigma $};
					\node[hexnode] (v2) at \hexcoord{\dx}{-1}{2} {$\tilde{\sigma}$};
					\node[hexnode,draw=none,fill=none] (w) at \hexcoord{\dx}{-1}{0} {};
					\node[trianglenode] (t-) at \hexcoord{\dx}{0.5}{0.5} {};
					\node[trianglenode] (t+) at \hexcoord{\dx}{1}{-0.5} {};
					\node[pentanode] (c-) at \hexcoord{\dx}{-0.5}{2.5} {};
					\node[pentanode] (c+) at \hexcoord{\dx}{0}{1.5} {};
					\node[squarenode] (s-) at \hexcoord{\dx}{-2}{2.5} {};
					\node[squarenode] (s+) at \hexcoord{\dx}{-1.5}{1.5} {};
					
					\draw[undirected] (v1) -- (w);
					\draw[directed, myblue] (v1) -- (v2);
					\draw[directed, gray, very thin] (t+) -- (v1);
					\draw[directed, gray, very thin] (v1) -- (t-);
					\draw[directed, gray, very thin] (s+) -- (v2);
					\draw[directed, gray, very thin] (v2) -- (s-);
					\draw[directed, gray, very thin] (c+) -- (v2);
					\draw[directed, gray, very thin] (v2) -- (c-);
					
					\node[highlight] (v2H) at \hexcoord{\dx}{-1}{2} {};
					\node[anchor=north west,myred,minimum size=8mm] (v2Hl) at \hexcoord{\dx}{-1}{2} {$ v_f $};
					
					\node[hexnode, draw=none, fill=none] (A1) at (1.5*\dx,1) {};
					\node[hexnode, draw=none, fill=none] (A2) at (2.5*\dx,1) {};
					\draw[bidirected] (A1) -- (A2);
					
					\begin{scope}[shift={(4.75*\dx,0)}]
						\node at (0,-1*\dx) {$ G_g $};
						
						\node[hexnode] (v1) at \hexcoord{\dx}{0}{0} {$ \sigma $};
						\node[hexnode] (v2) at \hexcoord{\dx}{-2}{2} {$\tilde{\sigma}$};
						\node[hexnode] (v2p) at \hexcoord{\dx}{0}{2} {$\tilde{\sigma}$};
						\node[hexnode,draw=none,fill=none] (w) at \hexcoord{\dx}{-1}{0} {};
						\node[trianglenode] (t-) at \hexcoord{\dx}{0.5}{0.5} {};
						\node[trianglenode] (t+) at \hexcoord{\dx}{1}{-0.5} {};
						\node[pentanode] (c-) at \hexcoord{\dx}{0.5}{2.5} {};
						\node[pentanode] (c+) at \hexcoord{\dx}{1}{1.5} {};
						\node[squarenode] (s-) at \hexcoord{\dx}{-3}{2.5} {};
						\node[squarenode] (s+) at \hexcoord{\dx}{-2.5}{1.5} {};
						
						\draw[undirected] (w) -- (v1);
						\draw[directed, myblue] (v1) -- (v2);
						\draw[directed, myred] (v1) -- (v2p);
						\draw[directed, gray, very thin] (t+) -- (v1);
						\draw[directed, gray, very thin] (v1) -- (t-);
						\draw[directed, gray, very thin] (s+) -- (v2);
						\draw[directed, gray, very thin] (v2) -- (s-);
						\draw[directed, gray, very thin] (c+) -- (v2p);
						\draw[directed, gray, very thin] (v2p) -- (c-);
						
						\node[highlight] (v2H) at \hexcoord{\dx}{-2}{2} {};
						\node[anchor=south west,myred,minimum size=8mm] (v2Hl) at (v2H) {$ v_g $};
						\node[highlight] (v2pH) at \hexcoord{\dx}{0}{2} {};
						\node[anchor=south east,myred,minimum size=8mm] (v2pHl) at (v2pH) {$ w_g $};
					\end{scope}
				
					\begin{scope}[shift={(9*\dx,0)}]
					\node at (0,-1*\dx) {$ G_f $};
					
					\node[hexnode] (v1) at \hexcoord{\dx}{0}{0} {$ \sigma $};
					\node[hexnode] (v2) at \hexcoord{\dx}{-1}{2} {$\tilde{\sigma}$};
					\node[hexnode,draw=none,fill=none] (w) at \hexcoord{\dx}{-1}{0} {};
					\node[trianglenode] (t-) at \hexcoord{\dx}{0.5}{0.5} {};
					\node[trianglenode] (t+) at \hexcoord{\dx}{1}{-0.5} {};
					\node[pentanode] (c-) at \hexcoord{\dx}{-0.5}{2.5} {};
					\node[pentanode] (c+) at \hexcoord{\dx}{0}{1.5} {};
					\node[squarenode] (s-) at \hexcoord{\dx}{-1}{0.5} {};
					\node[squarenode] (s+) at \hexcoord{\dx}{-0.5}{-0.5} {};
					
					\draw[undirected] (v1) -- (w);
					\draw[directed, myblue] (v1) -- (v2);
					\draw[directed, gray, very thin] (t+) -- (v1);
					\draw[directed, gray, very thin] (v1) -- (t-);
					\draw[directed, gray, very thin] (s+) -- (v1);
					\draw[directed, gray, very thin] (v1) -- (s-);
					\draw[directed, gray, very thin] (c+) -- (v2);
					\draw[directed, gray, very thin] (v2) -- (c-);
					
					\node[hexnode, draw=none, fill=none] (A1) at (1.5*\dx,1) {};
					\node[hexnode, draw=none, fill=none] (A2) at (2.5*\dx,1) {};
					\draw[bidirected] (A1) -- (A2);
					
					\node[highlight] (v1H) at \hexcoord{\dx}{0}{0} {};
					\node[anchor=north,myred,minimum size=10mm] (v1Hl) at (v1H) {$ v_f $};
					
					\begin{scope}[shift={(4*\dx,0)}]
						\node at (0,-1*\dx) {$ G_g $};
						
						\node[hexnode] (v1) at \hexcoord{\dx}{0}{0} {$ \sigma $};
						\node[hexnode] (v2) at \hexcoord{\dx}{-1}{2} {$\tilde{\sigma}$};
						\node[hexnode] (v2p) at \hexcoord{\dx}{-2}{4} {$\sigma$};
						\node[hexnode,draw=none,fill=none] (w) at \hexcoord{\dx}{-1}{0} {};
						\node[trianglenode] (t-) at \hexcoord{\dx}{0.5}{0.5} {};
						\node[trianglenode] (t+) at \hexcoord{\dx}{1}{-0.5} {};
						\node[pentanode] (c-) at \hexcoord{\dx}{-0.5}{2.5} {};
						\node[pentanode] (c+) at \hexcoord{\dx}{0}{1.5} {};
						\node[squarenode] (s-) at \hexcoord{\dx}{-1.5}{4.5} {};
						\node[squarenode] (s+) at \hexcoord{\dx}{-1}{3.5} {};
						
						\draw[undirected] (w) -- (v1);
						\draw[directed, myblue] (v1) -- (v2);
						\draw[directed, myred] (v2) -- (v2p);
						\draw[directed, gray, very thin] (t+) -- (v1);
						\draw[directed, gray, very thin] (v1) -- (t-);
						\draw[directed, gray, very thin] (s+) -- (v2p);
						\draw[directed, gray, very thin] (v2p) -- (s-);
						\draw[directed, gray, very thin] (c+) -- (v2);
						\draw[directed, gray, very thin] (v2) -- (c-);
						
						\node[highlight] (v1H) at \hexcoord{\dx}{0}{0} {};
						\node[anchor=north east,myred,minimum size=8mm] (v1Hl) at (v1H) {$ v_g $};
						\node[highlight] (v2pH) at \hexcoord{\dx}{-2}{4} {};
						\node[anchor=north east,myred,minimum size=8mm] (v2pHl) at (v2pH) {$ w_g $};
					\end{scope}
				\end{scope}
			\end{tikzpicture}
			\def\dx{\dxdefault}
			\caption{The $ H_p $- and $ \text{H}_s $-split on $ G_f $.}\label{fig_invariantH}
		\end{figure}
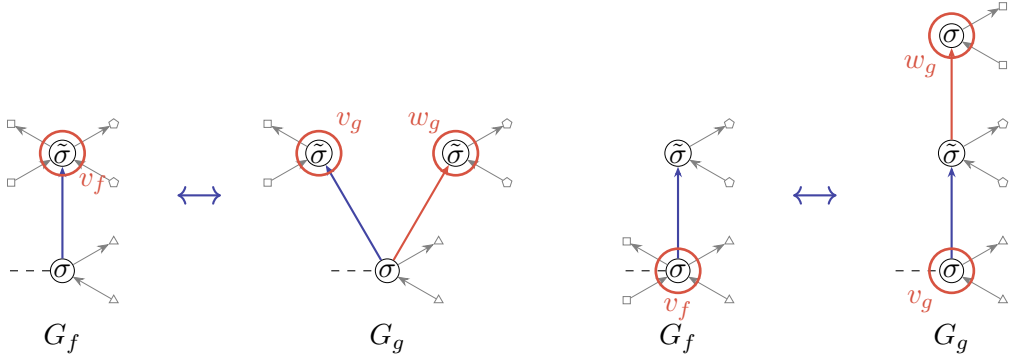
		We look at one half of the H singularity where the classification in Section~\ref{sec_treeEH} yields two cases when ignoring pairing (see Figure~\ref{fig_Hsplits}): an $ H_p $-split or an $ H_s $-split. In both cases the indegrees of $ G_f $ and $ G_g $ differ only in vertices $ v_f,v_g,w_g $ (see Figure~\ref{fig_invariantH}). They share the same topological degree and satisfy
		\begin{align*}
			\deg^-(v_f) &= \deg^-(v_g) + \deg^-(w_g) -1 \\
			\iff 1 - \deg^-(v_f) &= 1 - \deg^-(v_g)  + 1 - \deg^-(w_g).
		\end{align*}
		Therefore, we have $ F(f) = F(g) $.
	\end{proof}
	\begin{prop}\label{prop_extension}
		The invariant $ F $ extends uniquely onto $ \Imm_{<3}(\S^2,\R^3) $ such that $ F $ is invariant under regular homotopies without triple points.
	\end{prop}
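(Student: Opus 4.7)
The plan is to define the extension by generic approximation. First, recall from Section~\ref{sec_prelim} that the generic stratum $ I_0 $ is open and dense in $ \Imm(\S^2,\R^3) $, and that $ \Imm_{<3}(\S^2,\R^3) $ is itself open in $ \Imm(\S^2,\R^3) $ because the union of the strata containing triple points is closed. Hence generic immersions are dense in $ \Imm_{<3}(\S^2,\R^3) $. For any $ f\in\Imm_{<3}(\S^2,\R^3) $, I would choose a path-connected open neighborhood $ U\subseteq\Imm_{<3}(\S^2,\R^3) $ of $ f $ (for instance, the image of a convex chart neighborhood), pick any generic $ f'\in U $, and set $ F(f):=F(f') $.

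Well-definedness is the first thing to check: if $ f_1',f_2'\in U $ are both generic, connect them by a path $ \gamma $ in $ U $ and perturb $ \gamma $ to a generic regular homotopy that remains in $ \Imm_{<3}(\S^2,\R^3) $. Openness of $ \Imm_{<3} $ ensures that a sufficiently small perturbation stays in this set, so the perturbed path meets only the codimension one strata $ E $ and $ H $, and transversely. Theorem~\ref{thm_Invariant} then yields $ F(f_1')=F(f_2') $, so the value $ F(f) $ is independent of the chosen generic approximation.

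Next, I would verify that this extension is invariant under arbitrary regular homotopies through $ \Imm_{<3}(\S^2,\R^3) $. Given such a homotopy $ J $ from $ f_0 $ to $ f_1 $, I would approximate the endpoints by generic $ \tilde f_0,\tilde f_1 $ inside respective small neighborhoods as above, and perturb $ J $ to a generic regular homotopy $ \tilde J $ from $ \tilde f_0 $ to $ \tilde f_1 $ that stays in $ \Imm_{<3}(\S^2,\R^3) $. By construction $ F(f_i)=F(\tilde f_i) $, and Theorem~\ref{thm_Invariant} applied to $ \tilde J $ gives $ F(\tilde f_0)=F(\tilde f_1) $, hence $ F(f_0)=F(f_1) $. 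Uniqueness is then immediate: any invariant extension must agree with $ F $ on generic immersions, and its value at a non-generic $ f $ is forced by the invariance to equal $ F(f') $ for any generic $ f' $ in a small neighborhood of $ f $.

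The main obstacle is the well-definedness step, specifically ensuring that the connecting path between two generic approximations can simultaneously be kept inside the open set $ \Imm_{<3}(\S^2,\R^3) $ and be made generic. Openness of $ \Imm_{<3} $ and the standard transversality arguments for the stratification recalled in Section~\ref{sec_prelim} handle this, reducing the situation to a finite sequence of $ E $- and $ H $-crossings that Theorem~\ref{thm_Invariant} already controls.
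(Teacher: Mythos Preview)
Your proposal is correct and follows essentially the same approach as the paper: define the extension by picking a nearby generic approximation, use openness of $\Imm_{<3}(\S^2,\R^3)$ and Theorem~\ref{thm_Invariant} to show well-definedness, and deduce uniqueness from invariance. You are more explicit than the paper about verifying invariance of the extended map under \emph{arbitrary} (not just generic) regular homotopies and about the perturbation step, but the underlying argument is the same.
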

	\begin{proof}
		Let $ f\in \Imm(\S^2,\R^3) $ be without triple points. Then, in $ C^k $-topology a neighborhood of $ f $ does not have triple points and a slight perturbation $ \tilde{f} $ of $ f $ yields a generic immersion with $ F(\tilde f) $ defined. We set $ F(f):=F(\tilde f) $. This is well defined since any two generic perturbations of $ f $ are joined by a regular homotopy without triple points. Furthermore, this is the unique extended invariant since $ \tilde{f} $ was obtained from $ f $ by a regular homotopy without triple points.
	\end{proof}
	We denote by $ e_i:=(\delta_{ik})_{k\in\Z} $ the standard basis vectors of $ \Z^\Z $.
	\begin{ex}\label{ex_Fej}
		Let $ j $ be as in Figure~\ref{fig_sketchJ} with outer normal and recall the standard embedding $ e:\S^2\to\R^3 $. We compute directly with Lemma~\ref{lem_delta-f} and Examples~\ref{ex_eTree} and~\ref{ex_jTree}
		\begin{align*}
			F(-j) = e_{\m 3},\quad F(-e) = e_{\m 1},\quad F(e)=e_1,\quad F(j) = e_3.
		\end{align*}
	\end{ex}
	
	\section{Proof of Theorem~\ref{thm_imageF}}\label{sec_image}
	The invariant $ F $ implies that $ \Imm_{<3}(\S^2,\R^3) $ has many more components than the four indicated in Example~\ref{ex_Fej}. We give some examples in Figure~\ref{fig_Fothers} and Examples~\ref{ex_Fek} and~\ref{ex_F2e1-ek} that help to determine the image of $ F $.
	\begin{ex}\label{ex_Fek}
		For all odd $ k\in \Z $, there exist surfaces of revolution $ f_k\in \Imm_{<3}(\S^2,\R^3) $ with $ F(f_k)=e_{k} $  (see Figure~\ref{fig_Fek}).
	\end{ex}
	
	\begin{ex}\label{ex_F2e1-ek}
		For all odd $ k\in \Z $, there exist surfaces of revolution $ g_k\in \Imm_{<3}(\S^2,\R^3) $ with $ F(g_k)=2e_1 - e_{k} $ (see Figure~\ref{fig_F2e1-ek}).
	\end{ex}
	\begin{figure}[H]
		\centering
		\begin{tikzpicture}
			\node[anchor=south] at (0,0.4) {\includegraphics[width=0.95\textwidth]{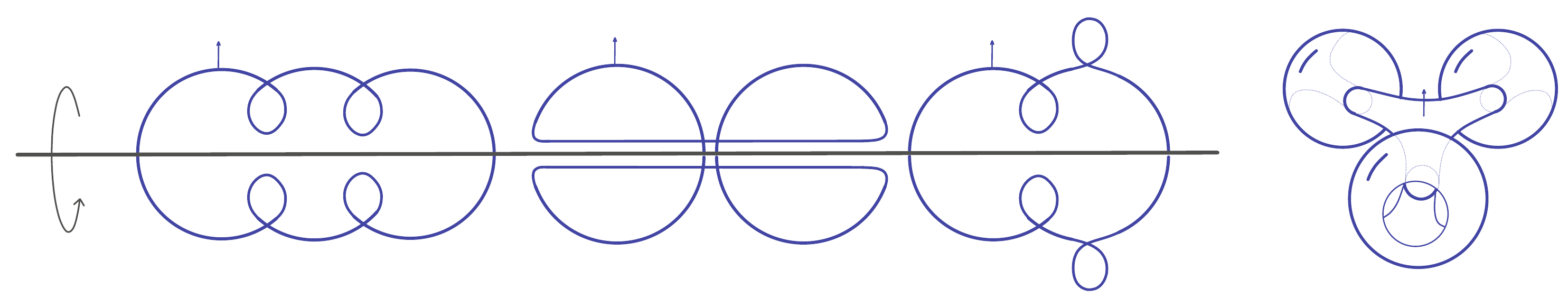}};
			\def\dd{0.25\textwidth}
			
			\foreach \k/\label in {-1.15/$F=2e_3-e_1$, -0.2/$F=2e_1 - e_{\m 1}$, 0.65/$F=e_3+e_{\m 1}-e_{1}$, 1.55/$F=3e_{\m 1}-2e_1$} {
				\node[anchor=south] at (\k*\dd,-2) {\small \label};
			}
			
			\def\dx{\dxdefault}

			\begin{scope}[shift={(-1.15*\dd,0)}]
				\node[hexnode] (v1) at \hexcoord{\dx}{0}{-1} {$ 3 $};
				\node[hexnode] (v2) at \hexcoord{\dx}{1}{-1} {$ 3 $};
				\node[hexnode] (w1) at \hexcoord{\dx}{-1}{0} {$1$};
				\node[hexnode] (w2) at \hexcoord{\dx}{0}{0} {$1$};
				\node[hexnode] (w3) at \hexcoord{\dx}{1}{0} {$1$};
				\draw[directed, myred] (v1) -- (w1);
				\draw[directed, myred] (v1) -- (w2);
				\draw[directed, myblue] (v2) -- (w2);
				\draw[directed, myblue] (v2) -- (w3);
			\end{scope}
			
			\begin{scope}[shift={(-0.2*\dd,0)}]
				\node[hexnode] (v1) at \hexcoord{\dx}{0}{-1} {$ 1 $};
				\node[hexnode] (v2) at \hexcoord{\dx}{1}{-1} {$ 1 $};
				\node[hexnode] (w1) at \hexcoord{\dx}{-1}{0} {$\m 1$};
				\node[hexnode] (w2) at \hexcoord{\dx}{0}{0} {$\m 1$};
				\node[hexnode] (w3) at \hexcoord{\dx}{1}{0} {$\m 1$};
				\draw[directed, myred] (v1) -- (w1);
				\draw[directed, myred] (v1) -- (w2);
				\draw[directed, myblue] (v2) -- (w2);
				\draw[directed, myblue] (v2) -- (w3);
			\end{scope}
			
			\begin{scope}[shift={(0.65*\dd,0)}]
				\node[hexnode] (v1) at \hexcoord{\dx}{0}{-1} {$ 3 $};
				\node[hexnode] (v2) at \hexcoord{\dx}{1}{-1} {$ \m 1 $};
				\node[hexnode] (w1) at \hexcoord{\dx}{-1}{0} {$1$};
				\node[hexnode] (w2) at \hexcoord{\dx}{0}{0} {$1$};
				\node[hexnode] (w3) at \hexcoord{\dx}{1}{0} {$1$};
				\draw[directed, myred] (v1) -- (w1);
				\draw[directed, myred] (v1) -- (w2);
				\draw[directed, myblue] (v2) -- (w2);
				\draw[directed, myblue] (v2) -- (w3);
			\end{scope}

			\def\dx{0.75}
			
			\begin{scope}[shift={(1.5*\dd,-0.1*\dd)}]
				\node[hexnode] (v) at \hexcoord{\dx}{0}{0} {\tiny $ 1 $};
				\node[hexnode] (w1) at \hexcoord{\dx}{-1}{1} {\tiny $ \m 1 $};
				\node[hexnode] (w2) at \hexcoord{\dx}{-2}{1} {\tiny $ 1$};
				\node[hexnode] (y1) at \hexcoord{\dx}{0}{-1} {\tiny $\m 1$};
				\node[hexnode] (y2) at \hexcoord{\dx}{-1}{-1} {\tiny $ 1$};
				\node[hexnode] (z1) at \hexcoord{\dx}{1}{0} {\tiny $ \m 1$};
				\node[hexnode] (z2) at \hexcoord{\dx}{2}{0} {\tiny $ 1$};
				\draw[directed, myred] (w1) -- (v);
				\draw[directed, myred] (w1) -- (w2);
				\draw[directed, myblue] (y1) -- (v);
				\draw[directed, myblue] (y1) -- (y2);
				\draw[directed, mygreen] (z1) -- (v);
				\draw[directed, mygreen] (z1) -- (z2);
			\end{scope}
		\end{tikzpicture}
		\caption{Some immersions $ f\in \Imm_{<3}(\S^2,\R^3) $ with their values of $ F $.}\label{fig_Fothers}
	\end{figure}
	\def\dx{\dxdefault}
	
	\begin{figure}[H]
		\centering
		\begin{tikzpicture}
			\def\dd{0.152\textwidth};
			
			\node[anchor=south] at (0,1.65*\dd) {\includegraphics[width=0.25\textwidth]{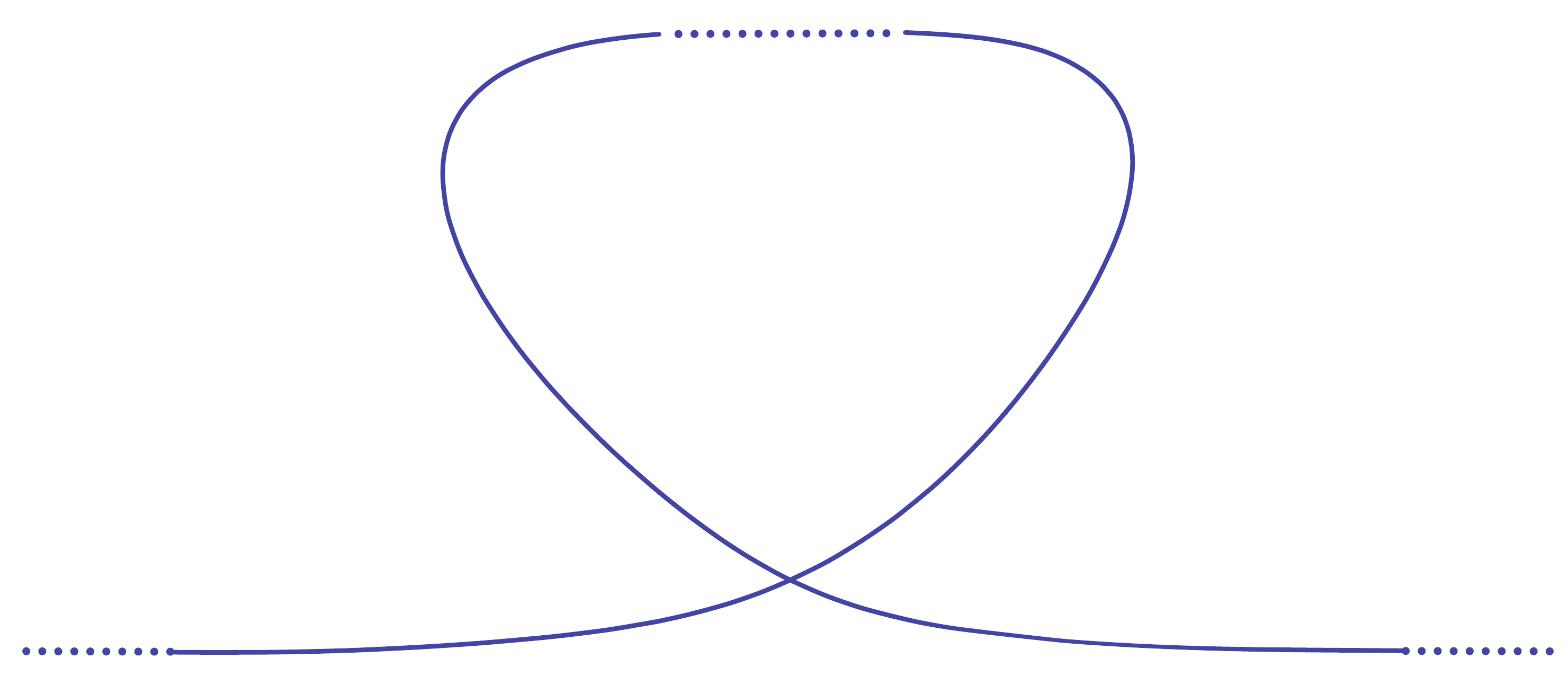}};
			\node[anchor=south] at (0,0) {\includegraphics[width=0.9\textwidth]{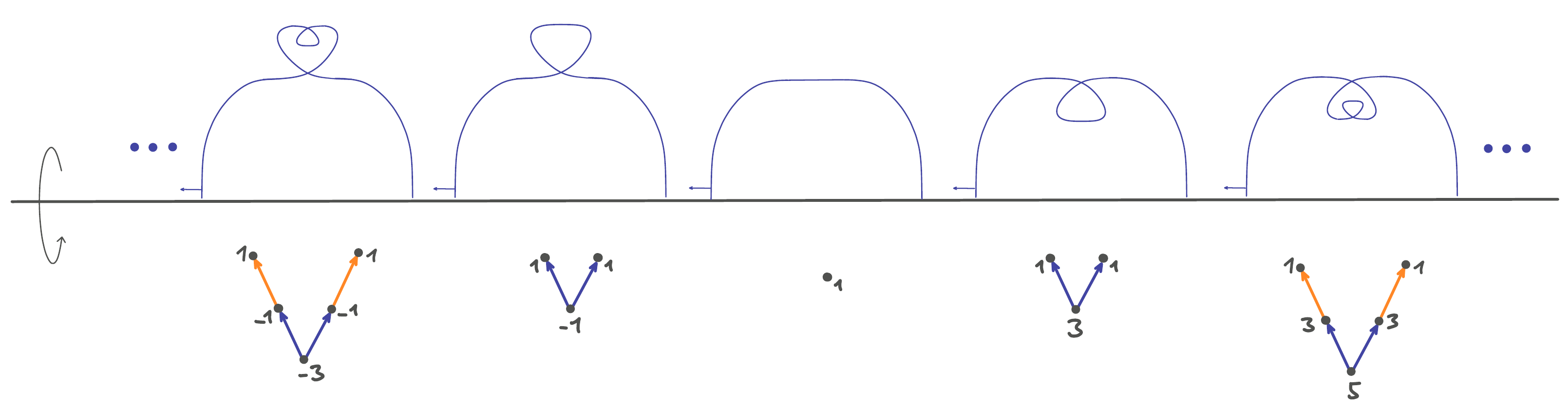}};
			
			\foreach \k/\label in {-2.1/$F=e_{\m 3}$, -1/$e_{\m 1}$, 0/$e_1$, 1/$e_3$, 2/$ e_5 $} {
				\node[anchor=north west] at (\k*\dd,0) {\small \label};
			}
		\end{tikzpicture}
		\caption{The fundamental piece to construct $ f_k $ and some examples for $ k=-3,-1,1,3,5 $.}\label{fig_Fek}
	\end{figure}
	
	\begin{figure}[H]
		\centering
		\begin{tikzpicture}
			\def\dd{0.15\textwidth};
			
			\node[anchor=south] at (0,1.65*\dd) {\includegraphics[width=0.23\textwidth]{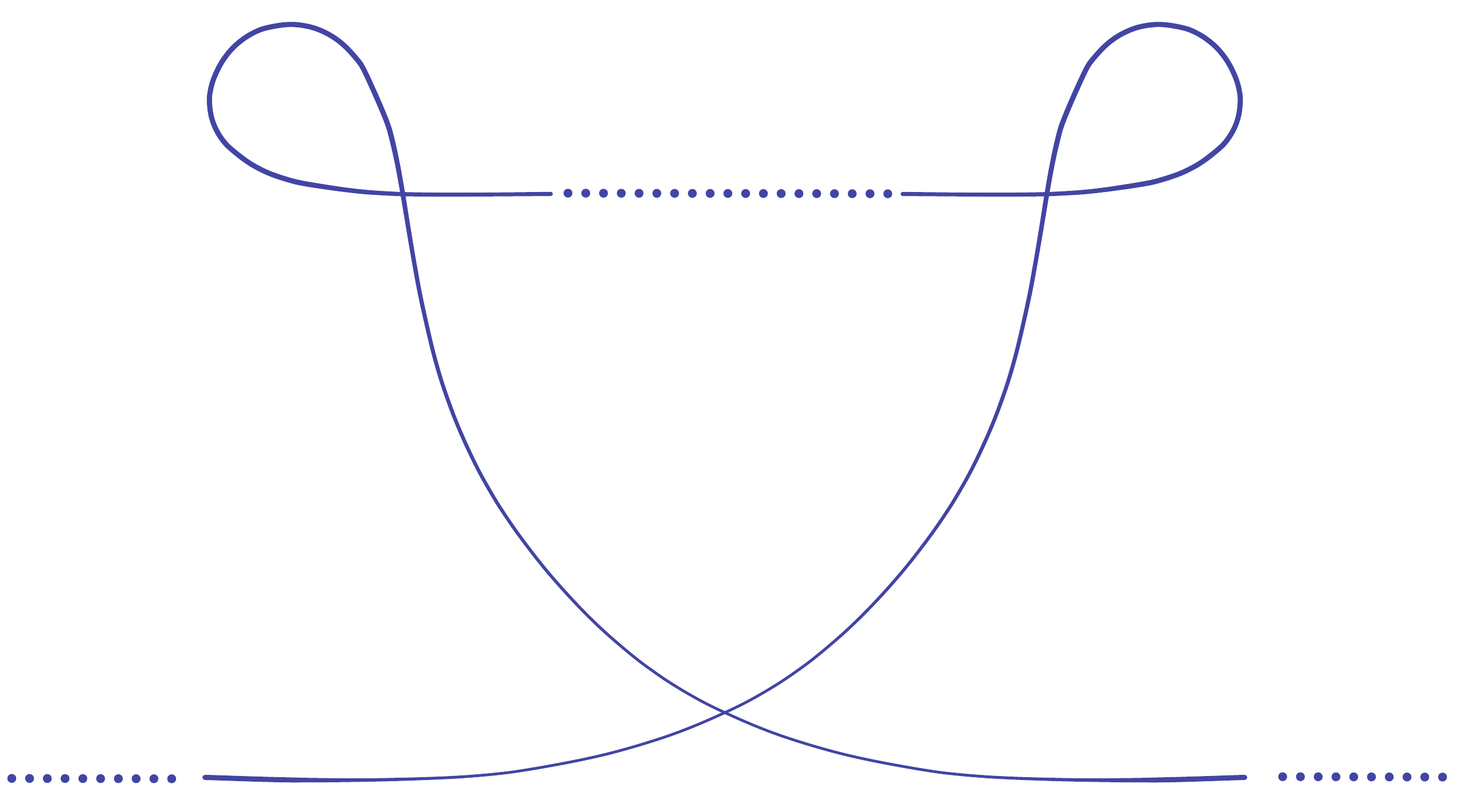}};
			\node[anchor=south] at (0,0) {\includegraphics[width=0.9\textwidth]{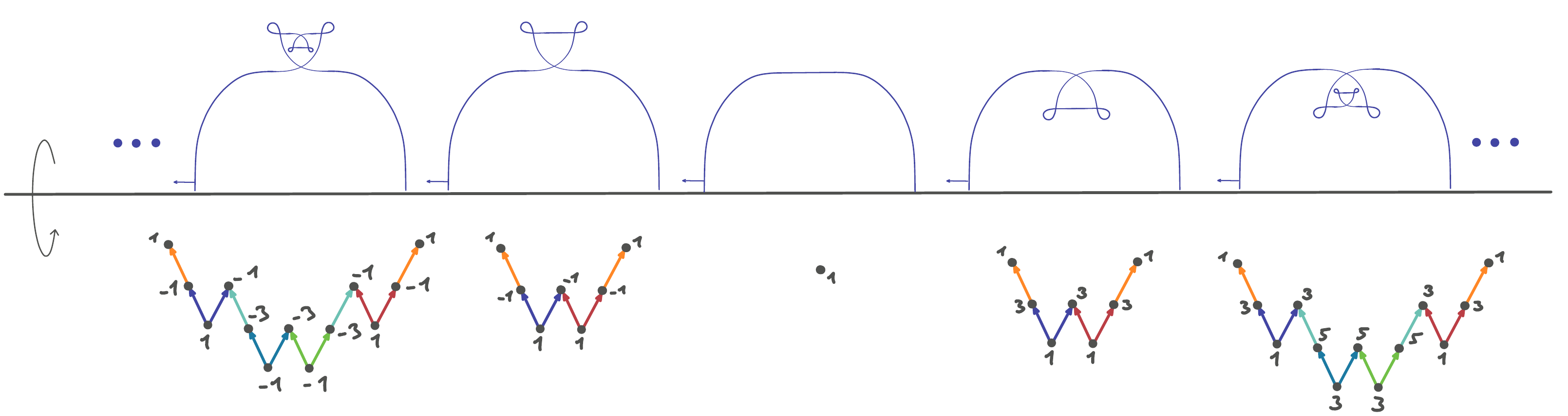}};
			
			\begin{scope}[shift={(0.22*\dd,0)}]
				\foreach \k/\label in {-2.2/$F=2e_1-e_{\m 3}$, -1/$2e_1-e_{\m 1}$, 0/$2e_1-e_1$, 1/$2e_1-e_3$, 2/$ 2e_1-e_5 $} {
					\node[anchor=north] at (\k*\dd,0) {\small \label};
				}
			\end{scope}
		\end{tikzpicture}
		\caption{The fundamental piece to construct $ g_k $ and some examples for $ k=-3,-1,1,3,5 $.}\label{fig_F2e1-ek}
	\end{figure}

	In the following, we show that the image of $ F $ is, in some sense, an affine hyperplane in $ l^1(\Z) $, namely
	\begin{equation*}\label{eq_U}
		U:=\left\{ (h_k)_{k\in\Z} \in \Z^\Z \ \middle| \ h_{2k} = 0 \text{ for all } k\in\Z \text{ and }\sum_{k\in\Z} |h_k|<\infty \text{ and } \sum_{k\in\Z} h_k = 1 \right\}.
	\end{equation*}
	
	\begin{lem}\label{lem_UinImF}
		For every $ h\in U $, there exists a surface of revolution $ i_h\in \Imm_{<3}(\S^2,\R^3) $ with $ F(i_h)=h $.
	\end{lem}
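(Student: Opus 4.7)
The plan is to realize every $h \in U$ by concatenating the surfaces of revolution $e$, $f_k$, and $g_k$ from Examples~\ref{ex_eTree}, \ref{ex_Fek}, and~\ref{ex_F2e1-ek}. First, I would introduce a gluing operation $\#$ on surfaces of revolution sharing a common axis: shift the two generating curves along the axis so that their resulting surfaces are disjoint, then join the curves by a thin tube near the axis, producing a new generating curve for a surface of revolution that still lies in $\Imm_{<3}(\S^2,\R^3)$. Since the connecting tube sits close to the axis, where neither original curve has self-intersections, no new double or triple points are introduced.

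Next, I would compute the effect of $\#$ on $F$. Each building block has a unique \emph{outermost} vertex, namely the one whose corresponding $\S^2$-component is adjacent to the unbounded component of the complement of the image. This vertex has $\delta = 1$ for outward-oriented surfaces, since the unbounded region has topological degree $0$ and the adjacent region has degree $1$ (via Lemma~\ref{lem_delta-f}). Under the gluing, these two outermost vertices merge into a single vertex of $\delta = 1$ whose indegree is the sum of the original two indegrees, while the rest of both trees, edges, orientations, and pairings, is preserved. The contribution to the $k=1$ slot of $F$ thereby changes from $(1 - d_1) + (1 - d_2)$ to $1 - (d_1 + d_2)$, decreasing by $1$, so
\[
F(f \,\#\, g) \;=\; F(f) + F(g) - e_1.
\]

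Given $h \in U$, set $(h_k)_+ := \max(h_k, 0)$ and $(h_k)_- := \max(-h_k, 0)$. Glue onto $e$ a total of $(h_k)_+$ copies of $f_k$ and $(h_k)_-$ copies of $g_k$ for each odd $k \neq 1$, and let $n$ denote the number of glued pieces. Iterating the gluing formula and substituting $F(e) = e_1$, $F(f_k) = e_k$, $F(g_k) = 2e_1 - e_k$ yields
\[
F(i_h) \;=\; e_1 + \sum_{k \neq 1}(h_k)_+ e_k + \sum_{k \neq 1}(h_k)_-(2e_1 - e_k) - n \cdot e_1,
\]
which simplifies to $h$ using $h_k = (h_k)_+ - (h_k)_-$ together with $\sum_k h_k = 1$.

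The main obstacle is the rigorous verification of the gluing formula, in particular identifying the outermost $\delta = 1$ vertex in each of $e$, $f_k$, and $g_k$, and checking that the geometric merge corresponds exactly to identifying these two vertices while preserving the rest of each tree structure (including their pairings and the orientations of all remaining edges). That $i_h$ is again a surface of revolution is automatic, since the gluing is performed along the shared axis.
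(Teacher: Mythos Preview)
Your argument is correct and essentially the same as the paper's: both introduce a pole-to-pole connected sum of surfaces of revolution, derive $F(f\#g)=F(f)+F(g)-e_1$ from the merging of two $\delta=1$ vertices, and then iterate over the building blocks $f_k,g_k$ (the paper alternates $f_{a(1)}\#g_{b(1)}\#\cdots\#f_{a(n)}$ whereas you start from $e$ and attach pieces indexed by $k\neq 1$, an inessential bookkeeping difference). One small wording issue: the ``outermost vertex'' of a building block need not be unique (both pole caps can border the unbounded region), but since your tube is placed near the axis you are really merging the \emph{pole} regions, which have $\delta=1$ because the normal there points into the unbounded component---this is exactly how the paper states the hypothesis for the connected sum.
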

	\begin{proof}
		Let us first describe a special connected sum for two surfaces of revolution whose oriented normal at the poles points into the unbounded component of $ \R^3\setminus f(\S^2) $. Consider the generating curves for the two surfaces and replace neighborhoods of the south pole of the first and the north pole of the second, by a joining neck (see Figure~\ref{fig_FSORGlue}).
		\begin{figure}[H]
			\centering
			\begin{tikzpicture}
				\def\dd{0.04*\textwidth};
				
				\node[anchor=south] at (-0.25*\dd,0) {		\includegraphics[width=0.6\textwidth]{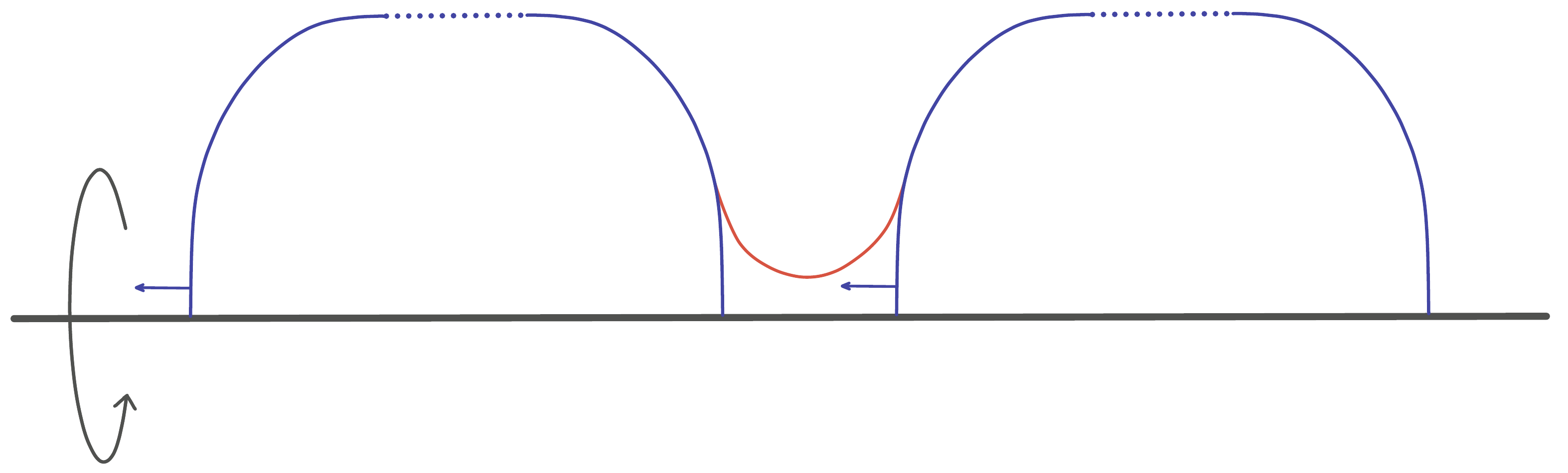}};
				
				\node[anchor=north] at (-0.8*\dd,1.5*\dd) {\tiny \shortstack{ south \\ pole}};
				\node[anchor=north] at (0.8*\dd,1.5*\dd) {\tiny \shortstack{ north \\ pole}};
			\end{tikzpicture}

			\caption{Connect two spheres of revolution along their poles.}\label{fig_FSORGlue}
		\end{figure}
		Let $ f  \#  g $ describe this connected sum. Then the double point tree of $ f \# g $ is just both double point trees of $ f $ and $ g $ glued together along extremal vertices (representing one of the poles) of topological degree $ \delta=1 $. Therefore, we have
		\begin{equation}\label{eq_FSORGlue}
			F(f \#  g) = F(f) + F(g) - e_1.
		\end{equation}
		Let $ h\in U $. Since $ h $ has finitely many nonzero entries whose sum is equal to one, there exists $ n:=\sum\limits_{k\in\Z} \max\{0,h_k\}$ and functions $ a:\{1,\dots,n\}\to\Z,\ b:\{1,\dots,n-1\}\to\Z $ with
		$$ h=\sum_{l=1}^{n} e_{a(l)} -  \sum_{l=1}^{n-1} e_{b(l)}. $$
		We then construct a surface of revolution from gluing together the surfaces $ f_k $ and $ g_k $ from Examples~\ref{ex_Fek} and~\ref{ex_F2e1-ek}. We construct the connected sum
		\begin{align*}
			i_h:=f_{a(1)}  \#  g_{b(1)}  \#  f_{a(2)}  \#  g_{b(2)}  \#  \dots  \#  f_{a(n-1)}  \#  g_{b(n-1)}  \#  f_{a(n)}.
		\end{align*}
		We obtain with \eqref{eq_FSORGlue}
		\begin{align*}
			F(i_h) &= \sum_{l=1}^{n} F(f_{a(l)}) +  \sum_{l=1}^{n-1} F(g_{b(l)}) -(2n-2)e_1\\
			&= \sum_{l=1}^{n} e_{a(l)} -  \sum_{l=1}^{n-1} e_{b(l)} = h. \qedhere
		\end{align*}
	\end{proof}
	\begin{proof}[Proof of Theorem~\ref{thm_imageF}]
		Together with Lemma~\ref{lem_UinImF}, it remains to show $ \im(F)\subseteq U $. 
		Let $ f\in \Imm_{<3}(\S^2,\R^3) $ and $ h:=F(f) $. By Lemma~\ref{lem_delta-f}~\ref{lem_delta-f_odd}, we have $ h_{2k}=0 $ for all $ k\in\Z $. Furthermore, $ f $ has only finitely many double point curves, hence $ G_f $ is a finite tree and thus $ \sum_{k\in\Z} |h_k|<\infty $. The last condition
		$$ 1 = \sum_{k\in\Z} h_k = \sum_{v\in V_f} (1-\deg^-(v)) $$ 
		is satisfied by every directed tree since 
		\begin{equation*}
			\sum_{v\in V_f} (1-\deg(v)^-) = |V_f| - \sum_{v\in V_f} \deg(v)^- = |V_f|-|E_f| = 1. \qedhere
		\end{equation*}
	\end{proof}

\section*{Acknowledgments}
The author thanks his supervisor Elena Mäder-Baumdicker for continuous support and extensive discussions that significantly contributed to this paper. He is also grateful to Robert Kusner and Tahl Nowik for helpful comments.
The author would like to thank the Hausdorff Institute\footnote{funded by the Deutsche Forschungsgemeinschaft (DFG, German Research Foundation) under Germany's Excellence Strategy – EXC-2047/1 – 390685813} in Bonn for a welcoming and productive stay, where part of this work was completed. This work is part of the author's PhD thesis.

\phantomsection\addcontentsline{toc}{section}{References}  
\bibliography{/home/seidel/Nextcloud/MyStuff/MyLit/000_bibs/total.bib}

\begin{thebibliography}{OSWA18}

\bibitem[Ale24]{Alexander_EmbSpheres}
J.~W. Alexander.
\newblock {O}n the {S}ubdivision of $3$-{S}pace by a {P}olyhedron.
\newblock {\em Proc. Nat. Acad. Sci. U.S.A.}, 10(1):6--8, 1924.

\bibitem[Ban74a]{Banch_TPProjections}
Thomas Banchoff.
\newblock Triple points and singularities of projections of smoothly immersed
  surfaces.
\newblock {\em Proc. Amer. Math. Soc.}, 46:402--406, 1974.

\bibitem[Ban74b]{Banch_TPSurgery}
Thomas~F. Banchoff.
\newblock Triple points and surgery of immersed surfaces.
\newblock {\em Proc. Amer. Math. Soc.}, 46:407--413, 1974.

\bibitem[Bla09]{Blatt}
Simon Blatt.
\newblock A singular example for the {W}illmore flow.
\newblock {\em Analysis (Munich)}, 29(4):407--430, 2009.

\bibitem[Bor07]{Borbely}
Albert Borb\'ely.
\newblock On the self-intersections of an immersed sphere.
\newblock {\em Bull. Austral. Math. Soc.}, 75(3):453--458, 2007.

\bibitem[Bre93]{Bred}
Glen~E. Bredon.
\newblock {\em Topology and geometry}, volume 139 of {\em Graduate Texts in
  Mathematics}.
\newblock Springer-Verlag, New York, 1993.

\bibitem[CDM12]{CDM_VassilievKnotInv}
S.~Chmutov, S.~Duzhin, and J.~Mostovoy.
\newblock {\em Introduction to {V}assiliev knot invariants}.
\newblock Cambridge University Press, Cambridge, 2012.

\bibitem[Gor97]{Gor_Local}
Victor~V. Goryunov.
\newblock Local invariants of mappings of surfaces into three-space.
\newblock In {\em The {A}rnold-{G}elfand mathematical seminars}, pages
  223--255. Birkh\"auser Boston, Boston, MA, 1997.

\bibitem[Hir76]{Hirsch_DT}
Morris~W. Hirsch.
\newblock {\em Differential topology}, volume No. 33 of {\em Graduate Texts in
  Mathematics}.
\newblock Springer-Verlag, New York-Heidelberg, 1976.

\bibitem[HK01]{HobbsKirk}
C.~A. Hobbs and N.~P. Kirk.
\newblock On the classification and bifurcation of multigerms of maps from
  surfaces to 3-space.
\newblock {\em Math. Scand.}, 89(1):57--96, 2001.

\bibitem[Hug85]{Hughes_SE}
John~F. Hughes.
\newblock Another proof that every eversion of the sphere has a quadruple
  point.
\newblock {\em Amer. J. Math.}, 107(2):501--505, 1985.

\bibitem[Kal16]{Kalmar_DoublePointsKnottedSpheres}
Boldizs{\'a}r Kalm{\'a}r.
\newblock Interlacement of double curves of immersed spheres.
\newblock {\em Discrete Comput. Geom.}, 55(3):550--570, 2016.

\bibitem[Lip04]{Lippner}
G\'abor Lippner.
\newblock On double points of immersions of spheres.
\newblock {\em Manuscripta Math.}, 113(2):239--250, 2004.

\bibitem[LY82]{LY}
Peter Li and Shing~Tung Yau.
\newblock A new conformal invariant and its applications to the {W}illmore
  conjecture and the first eigenvalue of compact surfaces.
\newblock {\em Invent. Math.}, 69(2):269--291, 1982.

\bibitem[MB81]{MB_SE}
Nelson Max and Tom Banchoff.
\newblock Every sphere eversion has a quadruple point.
\newblock In {\em Contributions to analysis and geometry ({B}altimore, {M}d.,
  1980)}, pages 191--209. Johns Hopkins Univ. Press, Baltimore, MD, 1981.

\bibitem[MBS25]{MBS_Landscape}
Elena Mäder-Baumdicker and Jona Seidel.
\newblock The {W}illmore {E}nergy {L}andscape of {S}pheres and {A}voidable
  {S}ingularities of the {W}illmore {F}low, 2025.
\newblock \href{https://arxiv.org/abs/2506.23359}{arXiv:2506.23359}.

\bibitem[Mon85]{Mond_Germs}
David Mond.
\newblock On the classification of germs of maps from {${\bf R}^2$} to {${\bf
  R}^3$}.
\newblock {\em Proc. London Math. Soc. (3)}, 50(2):333--369, 1985.

\bibitem[MS02]{MS_Num}
Uwe~F. Mayer and Gieri Simonett.
\newblock A numerical scheme for axisymmetric solutions of curvature-driven
  free boundary problems, with applications to the {W}illmore flow.
\newblock {\em Interfaces Free Bound.}, 4(1):89--109, 2002.

\bibitem[Now00]{Nowik_Quad}
Tahl Nowik.
\newblock Quadruple points of regular homotopies of surfaces in 3-manifolds.
\newblock {\em Topology}, 39(5):1069--1088, 2000.

\bibitem[Now01a]{Nowik_AutoQInv}
Tahl Nowik.
\newblock Automorphisms and embeddings of surfaces and quadruple points of
  regular homotopies.
\newblock {\em J. Differential Geom.}, 58(3):421--455, 2001.

\bibitem[Now01b]{Nowik_qInv}
Tahl Nowik.
\newblock Finite order {$q$}-invariants of immersions of surfaces into 3-space.
\newblock {\em Math. Z.}, 236(2):215--221, 2001.

\bibitem[Now04]{Nowik_OrderOne}
Tahl Nowik.
\newblock Order one invariants of immersions of surfaces into 3-space.
\newblock {\em Math. Ann.}, 328(1-2):261--283, 2004.

\bibitem[Now06]{Nowik_HigherOrder}
Tahl Nowik.
\newblock Higher-order invariants of immersions of surfaces into 3-space.
\newblock {\em Pacific J. Math.}, 223(2):333--347, 2006.

\bibitem[Now07]{Nowik_DissectingS2}
Tahl Nowik.
\newblock Dissecting the 2-sphere by immersions.
\newblock {\em Geom. Dedicata}, 127:37--41, 2007.

\bibitem[OSWA18]{WAO_Multigerms}
R.~Oset~Sinha and R.~Wik~Atique.
\newblock New techniques for classification of multigerms.
\newblock {\em Topology Appl.}, 234:311--334, 2018.

\bibitem[RR78]{Rosenstiehl_GaussCodes}
R.~C. Read and P.~Rosenstiehl.
\newblock On the {G}auss crossing problem.
\newblock In {\em Combinatorics ({P}roc. {F}ifth {H}ungarian {C}olloq.,
  {K}eszthely, 1976), {V}ol. {II}}, volume~18 of {\em Colloq. Math. Soc.
  J\'anos Bolyai}, pages 843--876. North-Holland, Amsterdam-New York, 1978.

\bibitem[Sma58]{Smale_SE}
Stephen Smale.
\newblock A classification of immersions of the two-sphere.
\newblock {\em Trans. Amer. Math. Soc.}, 90:281--290, 1958.

\bibitem[Sma59]{Smale_DiffS2}
Stephen Smale.
\newblock Diffeomorphisms of the {$2$}-sphere.
\newblock {\em Proc. Amer. Math. Soc.}, 10:621--626, 1959.

\bibitem[Vas90]{Vass_Cohom}
V.~A. Vassiliev.
\newblock Cohomology of knot spaces.
\newblock In {\em Theory of singularities and its applications}, volume~1 of
  {\em Adv. Soviet Math.}, pages 23--69. Amer. Math. Soc., Providence, RI,
  1990.

\end{thebibliography}

\end{document}